\newcommand{\toc}{\tableofcontents}
\theoremstyle{plain}
\newtheorem{theorem}{Theorem}[section]
\newtheorem*{theorem*}{Theorem}
\newtheorem*{corollary*}{Corollary}
\newtheorem{lemma}[theorem]{Lemma}
\newtheorem{claim}[theorem]{Claim}
\theoremstyle{definition}
\newtheorem{remark}[theorem]{Remark}
\newtheorem{definition}[theorem]{Definition}
\newtheorem*{definition*}{Definition}
\newcommand{\p}{\varphi}
\newcommand{\ZI}{\mathbb{Z}}
\DeclareMathOperator{\lra}{\leftrightarrow}
\newcommand{\ts}{\textsection}
\newcommand{\ip}[1]{\langle#1\rangle} 
\newcommand{\Aut}{\mathrm{Aut}}
\title{The Hamiltonian surface of $\Aut(F_2)$}
\author{Sylvain Barr\'e}
\author{Mika\"el Pichot}
\address{Sylvain Barr\'e, UMR 6205, LMBA, Université de Bretagne-Sud,BP 573, 56017, Vannes, France}\email{Sylvain.Barre@univ-ubs.fr}
\address{Mika\"el Pichot, McGill University, 805 Sherbrooke St W., Montr\'eal, QC H3A 0B9, Canada}\email{mikael.pichot@mcgill.ca}
\begin{document}

\setcounter{tocdepth}{1}

\begin{abstract} 
The group of automorphisms of the free group on two generators is known to act geometrically, in an essentially  unique way, on a 2-dimensional CAT(0) space called the Brady complex.  

In the present paper, we prove that the Brady complex $X$ contains precisely two Hamiltonian surfaces $\Sigma_1$ and $\Sigma_2$. By this we mean a surface in $X$ which visits every vertex and every edge precisely once.

Furthermore, there exist an isomorphism $\p\colon G_1\to G_2$ between two finite index subgroups $G_1$ and $G_2$ in $\Aut(F_2)$, respectively stabilizing the surfaces $\Sigma_1$ and $\Sigma_2$, and an isometric isomorphism $\psi\colon \Sigma_1\to \Sigma_2$, such that $\psi(g_1x_1)=\p(g_1)\psi(x_1)$ for every $x_1\in \Sigma_1$ and every $g_1\in G_1$. 

In other words, the Brady complex contains a unique Hamiltonian surface up to virtually equivariant isomorphism. We call this surface the Hamiltonian surface of $\Aut(F_2)$. It can be viewed as a covering space of a compact hyperbolic surface of genus 2 which can be described explicitly.
\end{abstract}

\maketitle

\section{Introduction}

The group of automorphisms of the free group on two generators is known to act geometrically, essentially in a unique way, on a CAT(0) space $X$ of dimension 2, called the Brady complex (see \cite[Theorem 1]{cp}).  
In a recent paper \cite{torus}, we gave a new construction of the complex $X$  starting from a metric 2-torus, in three steps, as follows:

\begin{enumerate}
\item start with a metric torus $T$ obtained by identifying isometrically the sides of a parallelogram of size $6\times 5$, subdivided into 30 Euclidean lozenges with sides of length 1   (using an appropriate partial Dehn twist, see \cite[Fig.\ 1]{torus});

\item pinch the vertices of the torus $T$ in pairs, by exhibiting an appropriate involution $\sigma$ of the vertex set of $T$---this results in a pinched torus $T/\ip{\sigma}$;

\item fill in the pinched torus $T/\ip{\sigma}$ by gluing 20 equilateral triangles  on its edge set  (we note here that since $T$ has 30 lozenges, $T/\ip{\sigma}$ contains 60 edges). 
\end{enumerate}

\noindent It can be verified (see \cite[\ts 2]{torus}) that the universal covering space $X$  of the compact cellular space $V$  defined by this three steps procedure is isomorphic to the Brady complex, and that its fundamental group is isomorphic to a subgroup of finite index in the group $\Aut(F_2)$ of automorphisms of the free group on two generators.

The metric torus $T$ maps to $V$ in the obvious way onto a pinched torus, namely $T/\ip\sigma$, which can be lifted to the universal cover $X$, as an infinite embedded surface. Let us call this surface $\Sigma$. It is to verify that $\Sigma$ is a connected surface which contains every vertex of $X$, and has infinitely many double points (it has one such point at every vertex). The double points arise from Step (2) in the construction. The present paper provides an alternative construction of $X$  in which the pinching step is not required, and which removes the double points. 

The space $V$ was discovered by using surgery techniques, applied to the group $\Aut(F_2)$, in particular, through an explicit description of some collars and group cobordisms for this group  (see \cite[\ts 6]{torus}). 
The specific choice of the metric torus $T$ in Step (1) turns out to be quite constrained; in particular, it was shown in \cite{torus} how to do a similar construction starting from a torus of size $6\times n$, for a arbitrary integer $n\geq 1$, but the use of other tori, or even the role of the value $m=6$ in this construction, remained unclear.
The ideas developed in the present paper are entirely independent of surgery. Rather than classifying collars and group cobordisms, we shall exhibit a canonical (up to equivariant isomorphism) surface directly in the universal cover $X$ itself, proceed to describe a compact quotient, which then can be used as a replacement to the torus in Step (1). 

In order to explain how a canonical, hyperbolic surface exit in a CAT(0) space, we shall begin with the observation that, because of the filling construction in Step (3) above, the surface $\Sigma$ (defined to be the lift of the pinched torus $T/\ip\sigma$ to the universal cover of the Brady complex $X$) has the interesting property of ``enveloping'' the CAT(0) complex $X$ in the following sense: 

\begin{definition}\label{D - enveloping surface}
We call \emph{enveloping surface} in a 2-complex $X$ a (connected) surface without boundaries which visits every edge precisely once.  
\end{definition}

Such a surface necessarily visits every vertex of $X$ (possibly multiple times). 
We mention that, in the construction above, once the enveloping surface $\Sigma$ is given, the filling-in procedure (i.e., Step (3)) is essentially canonical: it is a ``systolic filling'', which means that an equilateral triangle is glued for every  cycle of length 3 in $\Sigma$, where the  cycles of length 3 in $\Sigma$ are the systoles.

Roughly speaking, the notion of enveloping surface introduced in Def.\  \ref{D - enveloping surface} can be viewed as a 2-dimensional analog of the notion of a Hamilonian cycle in a finite graph. Although enveloping surfaces do not seem to have appeared in the literature thus far,  we note that the related (but somewhat distant) concept of Hamiltonian cycles, in a \emph{finite hypergraph}, is of course  well-known and well-studied. In the context  of infinite spaces, another related (and also somewhat distant) concept is that of a Hamiltonian, semi-infinite (or bi-infinite), path in an infinite graph.   

In our context, which concerns mainly non-positively curved 2-dimensional cellular complexes, we shall prefer to reserve the word ``Hamiltonian'' to surfaces which have no double point, and visit every edge, as well as every vertex, precisely once: 

\begin{definition}
We call \emph{Hamiltonian surface} in a 2-complex $X$ an enveloping surface which has no multiple vertex.  
\end{definition}

In other words, we shall call Hamiltonian surface  an embedded enveloping surface. It was recalled above that the enveloping surface $\Sigma$ in the Brady complex has infinitely many double points, and therefore, it is not a Hamiltonian surface of $X$ in this sense. 

\bigskip

Given this terminology, our main theorem in the present paper states the following.

\begin{theorem}\label{T - main theorem}
The Brady complex $X$ contains precisely two Hamiltonian surfaces.
\end{theorem}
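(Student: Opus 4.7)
The plan is to convert the existence question into a local combinatorial one at each vertex link, and then to resolve the global consistency using the cocompactness of the $\Aut(F_2)$-action on $X$ together with the explicit description of $X$ coming from the filling construction recalled in the introduction.

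The key observation I would use is the following. If $\Sigma\subset X$ is a Hamiltonian surface, then for every vertex $v\in X$ the intersection $\Sigma\cap \mathrm{lk}(v)$ is a single cycle (since $\Sigma$ is embedded and locally a disk at $v$), and moreover this cycle visits every vertex of $\mathrm{lk}(v)$, because $\Sigma$ uses every edge of $X$ incident to $v$. In other words, the slice of a Hamiltonian surface at each vertex is a Hamiltonian cycle (in the usual graph-theoretic sense) in the corresponding link. Conversely, a family of Hamiltonian cycles in the links, one per vertex, subject to the obvious pairwise compatibility along each edge, determines a unique candidate Hamiltonian surface.

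First I would enumerate the Hamiltonian cycles in each vertex link. By cocompactness there are only finitely many isomorphism types of links in $X$, so this is a finite combinatorial task; the CAT(0) condition applied to a 2-complex of equilateral triangles forces links to have girth at least $6$, which already constrains which Hamiltonian cycles are available. Each such cycle at $v$ specifies, for every edge $e$ incident to $v$, the unique pair of 2-cells of $\Sigma$ sharing $e$ in a neighborhood of $v$ --- namely the two triangles at $e$ corresponding to the two neighbors of $e$ along the cycle in $\mathrm{lk}(v)$. The pair prescribed at $v$ along $e$ must then coincide with the pair prescribed at the other endpoint $w$ of $e$, and this is a compatibility condition that can be checked edge by edge.

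The final step, and the main obstacle, is to propagate these compatibility conditions globally: fixing a Hamiltonian cycle at one vertex, one must track the forced choices across $X$ and show that exactly two globally consistent systems survive. I expect that most local choices will fail globally by creating either boundary edges (an edge used by only one triangle of the candidate surface), branching (an edge in more than two triangles), or multiple vertices (a vertex whose link slice splits as a disjoint union of shorter cycles rather than a single Hamiltonian cycle). The combinatorial bookkeeping needed to pin down exactly two surviving configurations, and to verify that each really assembles into a connected embedded surface without boundary, is where the heart of the proof lies, and is presumably carried out using the explicit fundamental domain $V$ and the systolic filling triangles from \cite{torus}.
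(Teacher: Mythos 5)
Your framing is the right one and matches the paper's: a Hamiltonian surface meets each vertex link in a Hamiltonian cycle of that link, so the problem reduces to classifying the local cycles and then showing that exactly two globally compatible systems of such cycles exist. But your proposal stops precisely where the proof begins. You write that the ``combinatorial bookkeeping needed to pin down exactly two surviving configurations\dots is where the heart of the proof lies,'' and you defer it to an unspecified computation in the fundamental domain. That bookkeeping is the entire content of the theorem, and the paper carries it out by three specific steps that your outline does not supply: (a) the link of $X$ is the Moebius ladder on four rungs with edges labeled by angular type ($t$, $\ell$, $\mathcal{L}$), and it has exactly five Hamiltonian cycles, falling into three labeled types; (b) a semi-local argument eliminates types 1 and 2 --- if a vertex had such a link cycle, an adjacent vertex would be forced to carry a subword $\mathcal{L}\text{--}t\text{--}\mathcal{L}$ in its link cycle, which occurs in none of the five Hamiltonian cycles --- leaving only the two type-3 labeled cycles at every vertex; and (c) a rigidity statement around each triangle (the ``shuriken'' lemma: $\ell\text{--}t$ extends uniquely to $\ell\text{--}t\text{--}\mathcal{L}$, and the configuration at two vertices of a triangle forces it at the third) which powers an induction on balls $B_n$ showing that a choice of local surface in $B_1$ extends uniquely to all of $X$. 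Without (b) and (c) you have no mechanism for ruling out the other three local cycles or for proving that the forced choices never conflict, so the count of exactly two is not established.

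Two smaller points. The paper does not need the compact quotient $V$ or the filling construction for this theorem; the propagation is done entirely in the universal cover using the labeled links, and $V$ only enters later for Theorem~\ref{T - isomorphic}. Also, your remark that the CAT(0) condition forces links of girth at least $6$ is false here: $X$ is built from triangles \emph{and} lozenges, its link is a cubic graph on eight vertices of girth $4$, and the relevant metric constraint is on angular length (cycles of length at least $2\pi$ with angles $t$, $\ell$, $\mathcal{L}$), not on combinatorial girth. The labels on the link edges, not the girth, are what make the classification of admissible local cycles possible.
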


We observe that a Hamiltonian surface in $X$ is necessarily hyperbolic of infinite genus. By analogy with graph theory, we call the group $\Aut(F_2)$ itself a ``Hamiltonian group''. Namely, it is Hamiltonian the sense that it acts geometrically on a CAT(0) 2-complex which contains a Hamiltonian surface; similarly, we shall call the Brady complex $X$ itself, a \emph{Hamiltonian CAT(0) complex}.  

Furthermore, we prove that the two surfaces in Th.\ \ref{T - main theorem} are \emph{periodic}, in the sense that they are acted upon freely by finite index subgroups in $\Aut(F_2)$. In addition, we prove that $X$ contains a unique surface up to isomorphism, showing that the group $\Aut(F_2)$ is in fact ``uniquely Hamiltonian'' in the following sense:

\begin{theorem}\label{T - isomorphic}
Let $\Sigma_1$ and $\Sigma_2$ denote the two Hamiltonian surfaces of $X$. Then there exists finite index subgroups $G_1$ and $G_2$ in $\Aut(F_2)$ respectively stabilizing the surfaces $\Sigma_1$ and $\Sigma_2$, a group isomorphism $\p\colon G_1\to G_2$ between  these groups, and an isometric isomorphism $\psi\colon \Sigma_1\to \Sigma_2$ between the Hamiltonian surfaces, such that $\psi(g_1x_1)=\p(g_1)\psi(x_1)$ for every $x_1\in \Sigma_1$ and every $g_1\in G_1$. 
\end{theorem}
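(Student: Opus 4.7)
The plan is to exploit the fact that $\Aut(F_2)$ acts on $X$ by cellular isometries and therefore permutes the set of Hamiltonian surfaces of $X$. By Th.\ \ref{T - main theorem} this set has exactly two elements, so the action induces a homomorphism $\rho\colon \Aut(F_2) \to \mathrm{Sym}\{\Sigma_1,\Sigma_2\} \cong \ZI/2\ZI$. Setting $G_i := \Stab_{\Aut(F_2)}(\Sigma_i)$, we have $\ker\rho \subseteq G_1 \cap G_2$, so each $G_i$ is of index at most $2$ in $\Aut(F_2)$, and in particular of finite index; this supplies the subgroups demanded by the statement.

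It then suffices to produce a single element $\gamma \in \Aut(F_2)$ with $\gamma(\Sigma_1) = \Sigma_2$, i.e.\ to show that $\rho$ is surjective. Once such a $\gamma$ is in hand, the remaining data are forced: set $\psi := \gamma|_{\Sigma_1}$, which is an isometric bijection $\Sigma_1 \to \Sigma_2$ because $\gamma$ is an isometry of $X$ sending $\Sigma_1$ onto $\Sigma_2$, and set $\p(g_1) := \gamma g_1 \gamma^{-1}$, which is a group isomorphism $G_1 \to G_2$ because conjugation by $\gamma$ interchanges $\Stab(\Sigma_1)$ and $\Stab(\Sigma_2)$, with inverse $g_2 \mapsto \gamma^{-1} g_2 \gamma$. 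The required equivariance relation $\psi(g_1 x_1) = \p(g_1)\psi(x_1)$ then reduces to the tautology $\gamma(g_1 x_1) = (\gamma g_1 \gamma^{-1})(\gamma x_1)$.

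The crux of the argument is therefore the production of the swapping element $\gamma$, and this is where the work lies. My plan is to read $\gamma$ off from the explicit combinatorial description of $\Sigma_1$ and $\Sigma_2$ built in the proof of Th.\ \ref{T - main theorem}: each Hamiltonian surface is determined by how it ``turns'' at each vertex of $X$, and cocompactness means these local patterns project to only finitely many patterns on a compact quotient. Concretely, I would pass to a finite regular cover $Y$ of the compact cellular space $V = X/\pi_1(V)$ to which both surfaces descend as finite sub-polyhedra, and search among the (finitely many) cellular isometries of $Y$ for one that swaps those two sub-polyhedra; any such isometry lifts to the desired $\gamma \in \Aut(F_2)$. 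The existence of such an isometry is the hard step and is expected to follow from the evident combinatorial symmetry of the Brady complex exhibited in the proof of Th.\ \ref{T - main theorem}. In the contrary case, where $\rho$ is trivial and $G_1 = G_2 = \Aut(F_2)$, one must instead construct $\psi$ by lifting an isomorphism between the two compact hyperbolic quotient surfaces alluded to in the abstract, choosing the lift to intertwine the deck actions on the two sides.
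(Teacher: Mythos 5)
Your formal reduction is fine and matches the paper's strategy in outline: both arguments come down to producing a single isometry of $X$ that carries $\Sigma_1$ onto $\Sigma_2$ and normalizes a finite-index subgroup of $\Aut(F_2)$, after which $\psi$ is its restriction and $\p$ is conjugation. But the step you label as ``the hard step'' and defer --- the actual existence of the swapping isometry --- is the entire mathematical content of the theorem, and you give no argument for it beyond the expectation that it ``follows from the evident combinatorial symmetry.'' Nothing in the proof of Theorem \ref{T - main theorem} exhibits such a symmetry: that proof only shows each lozenge determines a unique surface. The paper closes this gap by explicit computation: it builds a compact quotient $V$ of $X$ with three vertices, four triangles and six lozenges $x,y,z,x',y',z'$, identifies the two compact Hamiltonian surfaces $S$ (containing $x,y,z$) and $S'$ (containing $x',y',z'$), and writes down an explicit involution $\theta_2$ of $V$ on edge labels (sending $x\mapsto y'$, $y\mapsto x'$, $z\leftrightarrow z'$) which one checks is a cellular automorphism taking $S$ to $S'$; the desired map is the lift $\tilde\theta_2$ to $X$, which satisfies $\tilde\theta_2\circ s=(\theta_2)_*(s)\circ\tilde\theta_2$ for $s\in\pi_1(V)$. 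Without producing this automorphism (or some substitute), your proof is not complete.

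A secondary problem: you assert that a cellular isometry of a finite cover $Y$ of $V$ swapping the two sub-surfaces ``lifts to the desired $\gamma\in\Aut(F_2)$.'' A lift of such an isometry is an isometry of $X$ normalizing $\pi_1(Y)$, but there is no reason it should lie in the image of $\Aut(F_2)$ in $\mathrm{Isom}(X)$. This is why the theorem is stated with a pair of finite-index subgroups and an abstract isomorphism $\p$ rather than with an inner automorphism of $\Aut(F_2)$, and why the paper's proof never addresses whether your homomorphism $\rho\colon\Aut(F_2)\to\ZI/2\ZI$ is surjective: the intertwining map $\tilde\theta_2$ is simply not claimed to belong to $\Aut(F_2)$. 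Your dichotomy ($\rho$ surjective versus $\rho$ trivial) is therefore a detour; in the second branch your fallback (``lifting an isomorphism between the two compact hyperbolic quotient surfaces, choosing the lift to intertwine the deck actions'') is again exactly the unproved existence statement, since an arbitrary isomorphism of abstract genus-two surfaces need not extend to, or be compatible with, the ambient complex $V$.
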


Although we have at this point admittedly only begun the study of Hamiltonian surfaces in CAT(0) complexes, the fact that $X$ contains precisely two distinct Hamiltonian surfaces seems to be quite rare. There are  examples of CAT(0) complexes having  no Hamiltonian surface, and CAT(0) complexes having infinitely many. We do not know how to describe CAT(0) complexes containing precisely $n$ surfaces, where $n$ is a fixed integer.

We conclude with the remark that one can state an ``analog'' of the flat closing conjecture for Hamiltonian surfaces: suppose a group $G$ acts geometrically on a CAT(0) 2-complex $X$. If $X$ contains a Hamiltonian surface, does it also contain a periodic Hamiltonian surface?  This holds in the case of the group $G=\Aut(F_2)$, acting on the Brady complex, by the results above.

\toc

\section{The Moebius ladder}

Let $X$ be a CAT(0) 2-complex. For the notion of Hamiltonian surface in $X$ to be interesting, the vertex links of $X$ should all be Hamiltonian graphs; if  not then, clearly, $X$ does not contain any Hamiltonian surface. Thus, the following holds.

\begin{lemma}
Every Hamiltonian surface $\Sigma$ in $X$ is connected; for every $x\in X$, the intersection of  the link  $L_x$ of $x$ in $X$ with $\Sigma$ is a Hamiltonian cycle in $L_x$.  
\end{lemma}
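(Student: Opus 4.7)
The plan is to unpack the definitions carefully. Connectedness is part of the definition of enveloping surface (and hence of Hamiltonian surface), so there is nothing to prove there beyond a remark. The content of the lemma lies in the second assertion, which I would derive in three short steps, working locally at a fixed vertex $x \in X$.

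First, I would argue that every vertex of $X$ lies in $\Sigma$. Since $X$ is 2-dimensional, every vertex $x$ is the endpoint of some edge $e$ of $X$; as $\Sigma$ is enveloping, $e$ lies in $\Sigma$, hence so does $x$. Next, using the fact that $\Sigma$ is an embedded surface without boundary (no double vertex, no multiple edge), I would observe that the link of $x$ in $\Sigma$, which is the intersection $\Sigma \cap L_x$ with a small sphere around $x$, is a single topological circle, i.e.\ a cycle $C_x \subseteq L_x$. Indeed, locally at $x$ the surface $\Sigma$ is a 2-manifold without boundary containing only one point equal to $x$ (since $x$ is not a multiple vertex), so the small link is a disjoint union of circles; the connectedness of the local structure of an embedded surface at a manifold point then forces exactly one circle.

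Next, I would show that $C_x$ passes through every vertex of $L_x$ exactly once. Vertices of $L_x$ are in bijection with edges of $X$ incident to $x$. Any such edge $e$ lies in $\Sigma$, by the enveloping property; moreover $\Sigma$ visits $e$ precisely once, and visits $x$ precisely once (as $\Sigma$ is Hamiltonian). Consequently the germ of $e$ at $x$ is a single point in $\Sigma \cap L_x$, i.e.\ the vertex of $L_x$ corresponding to $e$ lies on $C_x$ with multiplicity one. Since this holds for every edge at $x$, the cycle $C_x$ is a Hamiltonian cycle in $L_x$.

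I do not anticipate a real obstacle, since the argument only rearranges the definitions of Hamiltonian surface, enveloping surface, and vertex link; the only point requiring a moment of care is the passage from ``$\Sigma$ is an embedded 2-manifold without boundary at $x$'' to ``$\Sigma \cap L_x$ is a \emph{single} circle'', which uses both the no-double-point hypothesis and the local structure of a surface at a manifold point.
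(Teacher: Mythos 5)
Your argument is correct and coincides with the paper's: the paper states this lemma without proof, treating it as an immediate consequence of the definitions, and your write-up simply fills in the routine details (connectedness is definitional; the no-double-vertex hypothesis makes $\Sigma\cap L_x$ a single circle; ``every edge once'' makes that circle pass through each vertex of $L_x$ exactly once). Nothing further is needed.
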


A complex is said to have order 2 if every edge is incident to exactly three faces. The  Brady complex $X$ is a CAT(0) complex of order 2 made of triangles and lozenges (see \cite{autf2puzzles}). It is known  (see \cite{cp}) that $\Aut(F_2)$ acts transitively on the vertex set of $X$ with unique vertex link isomorphic to a cubic graph on eight vertices:

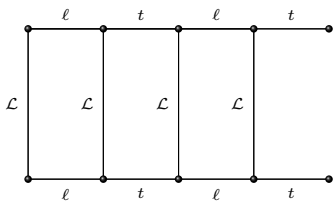
\begin{figure}[H]

\begin{tikzpicture}[shift ={(0.0,0.0)},scale = 1]
\tikzstyle{every node}=[font=\tiny]

\coordinate  (v1) at (0.0,0.0);
\coordinate (v2) at (1.0,0.0);
\coordinate (v3) at (2.0,0.0);
\coordinate (v4) at (3.0,0.0);
\coordinate (v5) at (0.0,2.0);
\coordinate (v6) at (1.0,2.0);
\coordinate (v7) at (2.0,2.0);
\coordinate (v8) at (3.0,2.0);
\coordinate (v9) at (4.0,0.0);
\coordinate (v10) at (4.0,2.0);
\draw[solid,thin,color=black,-] (v1) -- (v2);
\draw[solid,thin,color=black,-] (v2) -- (v3);
\draw[solid,thin,color=black,-] (v3) -- (v4);
\draw[solid,thin,color=black,-] (v5) -- (v6);
\draw[solid,thin,color=black,-] (v6) -- (v7);
\draw[solid,thin,color=black,-] (v7) -- (v8);
\draw[solid,thin,color=black,-] (v1) -- (v5);
\draw[solid,thin,color=black,-] (v2) -- (v6);
\draw[solid,thin,color=black,-] (v3) -- (v7);
\draw[solid,thin,color=black,-] (v4) -- (v8);
\draw[solid,thin,color=black,-] (v4) -- (v9);
\draw[solid,thin,color=black,-] (v5) -- (v10);

\draw (v1) -- (v2) node[midway, below] {$\ell$};
\draw (v2) -- (v3) node[midway, below] {$t$};
\draw (v3) -- (v4) node[midway, below] {$\ell$};
\draw (v4) -- (v9) node[midway, below] {$t$};
\draw (v5) -- (v6) node[midway, above] {$\ell$};
\draw (v6) -- (v7) node[midway, above] {$t$};
\draw (v7) -- (v8) node[midway, above] {$\ell$};
\draw (v8) -- (v10) node[midway, above] {$t$};
\draw (v1) -- (v5) node[midway, left] {$\mathcal{L}$};
\draw (v2) -- (v6) node[midway, left] {$\mathcal{L}$};
\draw (v3) -- (v7) node[midway, left] {$\mathcal{L}$};
\draw (v4) -- (v8) node[midway, left] {$\mathcal{L}$};

\shade[ball color=black] (v1) circle (.05);
\shade[ball color=black] (v2) circle (.05);
\shade[ball color=black] (v3) circle (.05);
\shade[ball color=black] (v4) circle (.05);
\shade[ball color=black] (v5) circle (.05);
\shade[ball color=black] (v6) circle (.05);
\shade[ball color=black] (v7) circle (.05);
\shade[ball color=black] (v8) circle (.05);
\shade[ball color=black] (v9) circle (.05);
\shade[ball color=black] (v10) circle (.05);

\end{tikzpicture}
\caption{Moebius ladder on four rungs}\label{Fig Moebius Ladder}
\end{figure}

This figure represents the link of $X$ as a Moebius ladder $L$ on four rungs and eight vertices. The labels on this graph are from \cite[\ts 4]{torus} and represent the angular type of every faces: $t$ for a triangle angle, $\ell$ for a small lozenge angle, and $\mathcal{L}$ for a large lozenge angle. This link is a vertex transitive cubic graph which contains Hamiltonian cycles. The  Hamiltonian cycles are easy to classify:

\begin{lemma} \label{L- Hamiltonian cycles} 
The Moebius ladder $L$ contains precisely five Hamiltonian graphs:
\begin{enumerate}
\item A unique cycle without rungs
\item Four cycles having a pair of missing consecutive rungs
\end{enumerate}
By consecutive rung, we mean rungs connected by two horizontal edges. Furthemore, the rungs in a Hamiltonian cycle in case 2 are at distance 3 (i.e., separated by a pair of paths consisting of three horizontal edges).
\end{lemma}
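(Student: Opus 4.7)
The plan is to enumerate the Hamiltonian cycles in $L$ by the number $r$ of rung edges they contain. Since $L$ has $8$ horizontal edges and $4$ rungs, and a Hamiltonian cycle uses exactly $8$ edges, each such cycle contains $r$ rungs and $8-r$ horizontal edges with $r\in\{0,1,2,3,4\}$. The main lever will be the degree constraint: each vertex has degree $3$ in $L$ (two horizontal neighbors and one rung neighbor) and degree $2$ in the cycle, so at every vertex exactly one incident edge is omitted; and rungs are the antipodal chords of the outer $8$-cycle $C_8$.

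I would first dispose of the boundary cases. For $r=0$ the cycle is forced to be $C_8$ itself, which accounts for item (1). For $r=1$, deleting one horizontal edge from $C_8$ leaves a path whose endpoints are at cyclic distance $1$ on $C_8$, but any single rung joins antipodal vertices (distance $4$); this is impossible. For $r=4$, the four remaining horizontal edges would have to form a perfect matching of $C_8$; only two such matchings exist on $C_8$, and a direct check shows that each, combined with all four rungs, decomposes into two disjoint $4$-cycles rather than a single $8$-cycle. For $r=3$, the two endpoints of the unique unused rung have no rung-edge in the cycle, so both of their horizontal edges must be used; this forces four specific horizontal edges into the cycle, while the two ``middle'' vertices on each side cannot then be given their required horizontal degree without over-degreeing a neighbor, so no fifth horizontal edge can be placed.

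The substantive case is $r=2$, and this is where the real work lies. I would first argue that the two removed horizontal edges cannot be adjacent, for otherwise their common vertex would have horizontal degree $0$ in the cycle and would need two incident rung-edges, which is impossible. The horizontal subgraph then splits into two disjoint paths whose four endpoints have horizontal degree $1$ while all interior vertices have horizontal degree $2$, forcing the two used rungs to cover exactly these four endpoints. Since every rung joins a pair of antipodal vertices of $C_8$, the four path-endpoints must split into two antipodal pairs, and a short parameterisation (removed edges $(v_i,v_{i+1})$ and $(v_j,v_{j+1})$) shows this happens if and only if $j-i\equiv 4 \pmod 8$, i.e., the two removed horizontal edges are themselves antipodal on $C_8$. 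There are exactly four antipodal pairs of horizontal edges on $C_8$, and each determines a unique Hamiltonian cycle, yielding the four cycles of item (2).

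The ``distance $3$'' remark is then immediate from this description: the two removed horizontal edges are precisely the pair of edges that would join the two used rungs on either side of the ladder, so within the cycle these two rungs are separated by the two remaining arcs of $C_8$, each of length $3$; the two unused rungs then sit between these arcs, joined by a pair of (used) horizontal edges, hence are \emph{consecutive} in the sense of the statement. Totalling gives $1+4=5$ Hamiltonian cycles as claimed. The only nontrivial obstacle is the $r=2$ enumeration, which needs the above short case analysis rather than a single slick argument; the cases $r\in\{1,3,4\}$ are all killed immediately by degree counting.
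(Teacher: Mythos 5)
Your proof is correct and follows essentially the same route as the paper's: a case analysis on the number $r\in\{0,1,2,3,4\}$ of rungs used by the cycle, driven by the fact that each vertex has degree $3$ and lies on exactly one rung (the paper packages this as the claim that a missing rung forces its four adjacent horizontal edges into the cycle). Your realization of $L$ as $C_8$ plus its four antipodal chords merely makes explicit the verifications the paper labels ``easily seen'' or ``easily verified'' (notably the $r=2$ and $r=4$ cases), which is a welcome sharpening but not a different method.
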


We should point out that the uniqueness of a Hamiltonian surface a complex of order 2, such as in the Brady complex $X$, cannot result from an analogous statement holding in the links, where the Hamiltonian cycles are never unique. Indeed,  every Hamiltonian cubic graph must contain at least 3 distinct Hamiltonian cycles. This follows by the theorem of Tutte \cite{Tu} that, more precisely, any  given edge must contain an even number of such cycles---and therefore that some vertex must contain at least three.

\begin{proof}
We call rung the vertical edges in Fig.\ \ref{Fig Moebius Ladder}. Let $R$ denote the number of rungs in a Hamiltonian cycle. We distinguish several cases.

\bigskip

$R=0$. 

There exists a unique such Hamiltonian cycle. It is the union of the two horizontal tracks.

\bigskip

$R=1$. 

There are no such cycles, since they would contain the two horizontal tracks entirely. 

\bigskip

More generally, we shall use of the following fact which is elementary to establish.

\begin{claim} If a Hamiltonian cycle misses a rung, then it must contain the four adjacent horizontal edges.
\end{claim}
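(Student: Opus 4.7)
The plan is a direct degree-count argument, exploiting the fact that the Moebius ladder $L$ is cubic. I would first recall that a Hamiltonian cycle $H$ in any graph is a $2$-regular spanning subgraph, so at each vertex exactly two of the incident edges belong to $H$. Since $L$ is cubic, this means that at every vertex precisely one incident edge is \emph{missing} from $H$.

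Next, I would fix a rung $e=\{u,v\}$ that is not contained in $H$ and look locally at its endpoints. At the vertex $u$ there are exactly three incident edges: the rung $e$, and the two horizontal edges meeting $u$ (one from each of the two horizontal tracks, using the labeling of Fig.\ \ref{Fig Moebius Ladder}). Since $e\notin H$ and $H$ uses two of the three edges at $u$, both horizontal edges at $u$ must belong to $H$. The identical argument at $v$ forces the two horizontal edges at $v$ into $H$ as well. These four horizontal edges are precisely the ones the claim describes as ``adjacent'' to the rung $e$, so the claim follows.

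I do not expect any real obstacle here; the only thing to be careful about is the terminology---making explicit that ``adjacent horizontal edges'' of a rung means the four horizontal edges sharing a vertex with the rung (two at each endpoint), and remembering that in the Moebius ladder the horizontal tracks are glued with a twist but this does not affect the local degree count at $u$ and $v$. The entire argument is local to the two endpoints of the missing rung, so it applies uniformly to any of the four rungs.
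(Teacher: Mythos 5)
Your degree-count argument is correct: since a Hamiltonian cycle is a $2$-regular spanning subgraph of the cubic graph $L$, exactly one edge is omitted at each vertex, so omitting the rung forces both horizontal edges at each of its two endpoints into the cycle. The paper states this claim without proof as ``elementary to establish,'' and your argument is precisely the standard one it has in mind, so there is nothing further to compare.
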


\bigskip

$R=2$. 

There are four cycles corresponding to  pairs of  missing consecutive rungs; it is easily seen in this cases that the rungs are separated by a pair of horizontal paths of simplicial length $3$.

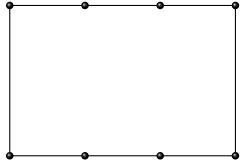
\begin{figure}[H]

\begin{tikzpicture}[shift ={(0.0,0.0)},scale = 1]
\tikzstyle{every node}=[font=\tiny]

\coordinate  (v1) at (0.0,0.0);
\coordinate (v2) at (1.0,0.0);
\coordinate (v3) at (2.0,0.0);
\coordinate (v4) at (3.0,0.0);
\coordinate (v5) at (0.0,2.0);
\coordinate (v6) at (1.0,2.0);
\coordinate (v7) at (2.0,2.0);
\coordinate (v8) at (3.0,2.0);
\coordinate (v9) at (4.0,0.0);
\coordinate (v10) at (4.0,2.0);
\draw[solid,thin,color=black,-] (v1) -- (v2);
\draw[solid,thin,color=black,-] (v2) -- (v3);
\draw[solid,thin,color=black,-] (v3) -- (v4);
\draw[solid,thin,color=black,-] (v5) -- (v6);
\draw[solid,thin,color=black,-] (v6) -- (v7);
\draw[solid,thin,color=black,-] (v7) -- (v8);
\draw[solid,thin,color=black,-] (v1) -- (v5);
\draw[solid,thin,color=black,-] (v4) -- (v8);

\shade[ball color=black] (v1) circle (.05);
\shade[ball color=black] (v2) circle (.05);
\shade[ball color=black] (v3) circle (.05);
\shade[ball color=black] (v4) circle (.05);
\shade[ball color=black] (v5) circle (.05);
\shade[ball color=black] (v6) circle (.05);
\shade[ball color=black] (v7) circle (.05);
\shade[ball color=black] (v8) circle (.05);

\end{tikzpicture}
\caption{Hamiltonian cycle with two rungs}
\end{figure}

It follows by the claim above that the missing rungs are consecutive. 

\bigskip

$R=3$. 

There are no such cycles. Indeed, it would have a unique missing rung, and therefore would, by the claim, contain the four horizontal edges attached to this rung, which is clearly impossible.  

\bigskip

$R=4$. 

There are no such cycles. Indeed, it is easily verified that, starting from an arbitrary horizontal edge, the unique non-backtracking path that contains this edge, and all rungs, is not a cycle. This follows because $L$ is a Moebius ladder on four rungs.  
\end{proof}

It is convenient to classify the Hamiltonian cycles into three types as follows:

\begin{figure}[H]

\begin{tikzpicture}[shift ={(0.0,0.0)},scale = 1]
\tikzstyle{every node}=[font=\tiny]

\coordinate  (v1) at (0.0,0.0);
\coordinate (v2) at (1.0,0.0);
\coordinate (v3) at (2.0,0.0);
\coordinate (v4) at (3.0,0.0);
\coordinate (v5) at (0.0,2.0);
\coordinate (v6) at (1.0,2.0);
\coordinate (v7) at (2.0,2.0);
\coordinate (v8) at (3.0,2.0);
\coordinate (v9) at (4.0,0.0);
\coordinate (v10) at (4.0,2.0);
\draw[solid,thin,color=black,-] (v1) -- (v2);
\draw[solid,thin,color=black,-] (v2) -- (v3);
\draw[solid,thin,color=black,-] (v3) -- (v4);
\draw[solid,thin,color=black,-] (v5) -- (v6);
\draw[solid,thin,color=black,-] (v6) -- (v7);
\draw[solid,thin,color=black,-] (v7) -- (v8);
\draw[solid,thin,color=black,-] (v4) -- (v9);
\draw[solid,thin,color=black,-] (v5) -- (v10);

\draw (v1) -- (v2) node[midway, below] {$\ell$};
\draw (v2) -- (v3) node[midway, below] {$t$};
\draw (v3) -- (v4) node[midway, below] {$\ell$};
\draw (v4) -- (v9) node[midway, below] {$t$};
\draw (v5) -- (v6) node[midway, above] {$\ell$};
\draw (v6) -- (v7) node[midway, above] {$t$};
\draw (v7) -- (v8) node[midway, above] {$\ell$};
\draw (v8) -- (v10) node[midway, above] {$t$};

\shade[ball color=black] (v1) circle (.05);
\shade[ball color=black] (v2) circle (.05);
\shade[ball color=black] (v3) circle (.05);
\shade[ball color=black] (v4) circle (.05);
\shade[ball color=black] (v5) circle (.05);
\shade[ball color=black] (v6) circle (.05);
\shade[ball color=black] (v7) circle (.05);
\shade[ball color=black] (v8) circle (.05);
\shade[ball color=black] (v9) circle (.05);
\shade[ball color=black] (v10) circle (.05);

\end{tikzpicture}
\begin{tikzpicture}[shift ={(0.0,0.0)},scale = 1]
\tikzstyle{every node}=[font=\tiny]

\coordinate  (v1) at (0.0,0.0);
\coordinate (v2) at (1.0,0.0);
\coordinate (v3) at (2.0,0.0);
\coordinate (v4) at (3.0,0.0);
\coordinate (v5) at (0.0,2.0);
\coordinate (v6) at (1.0,2.0);
\coordinate (v7) at (2.0,2.0);
\coordinate (v8) at (3.0,2.0);
\coordinate (v9) at (4.0,0.0);
\coordinate (v10) at (4.0,2.0);
\draw[solid,thin,color=black,-] (v1) -- (v2);
\draw[solid,thin,color=black,-] (v2) -- (v3);
\draw[solid,thin,color=black,-] (v3) -- (v4);
\draw[solid,thin,color=black,-] (v5) -- (v6);
\draw[solid,thin,color=black,-] (v6) -- (v7);
\draw[solid,thin,color=black,-] (v7) -- (v8);
\draw[solid,thin,color=black,-] (v1) -- (v5);
\draw[solid,thin,color=black,-] (v4) -- (v8);

\draw (v1) -- (v2) node[midway, below] {$\ell$};
\draw (v2) -- (v3) node[midway, below] {$t$};
\draw (v3) -- (v4) node[midway, below] {$\ell$};
\draw (v5) -- (v6) node[midway, above] {$\ell$};
\draw (v6) -- (v7) node[midway, above] {$t$};
\draw (v7) -- (v8) node[midway, above] {$\ell$};
\draw (v1) -- (v5) node[midway, left] {$\mathcal{L}$};
\draw (v4) -- (v8) node[midway, left] {$\mathcal{L}$};

\shade[ball color=black] (v1) circle (.05);
\shade[ball color=black] (v2) circle (.05);
\shade[ball color=black] (v3) circle (.05);
\shade[ball color=black] (v4) circle (.05);
\shade[ball color=black] (v5) circle (.05);
\shade[ball color=black] (v6) circle (.05);
\shade[ball color=black] (v7) circle (.05);
\shade[ball color=black] (v8) circle (.05);

\end{tikzpicture}
\begin{tikzpicture}[shift ={(0.0,0.0)},scale = 1]
\tikzstyle{every node}=[font=\tiny]

\coordinate  (v1) at (0.0,0.0);
\coordinate (v2) at (1.0,0.0);
\coordinate (v3) at (2.0,0.0);
\coordinate (v4) at (3.0,0.0);
\coordinate (v5) at (0.0,2.0);
\coordinate (v6) at (1.0,2.0);
\coordinate (v7) at (2.0,2.0);
\coordinate (v8) at (3.0,2.0);
\coordinate (v9) at (4.0,0.0);
\coordinate (v10) at (4.0,2.0);
\draw[solid,thin,color=black,-] (v1) -- (v2);
\draw[solid,thin,color=black,-] (v2) -- (v3);
\draw[solid,thin,color=black,-] (v3) -- (v4);
\draw[solid,thin,color=black,-] (v5) -- (v6);
\draw[solid,thin,color=black,-] (v6) -- (v7);
\draw[solid,thin,color=black,-] (v7) -- (v8);
\draw[solid,thin,color=black,-] (v1) -- (v5);
\draw[solid,thin,color=black,-] (v4) -- (v8);

\draw (v1) -- (v2) node[midway, below] {$t$};
\draw (v2) -- (v3) node[midway, below] {$\ell$};
\draw (v3) -- (v4) node[midway, below] {$t$};
\draw (v5) -- (v6) node[midway, above] {$t$};
\draw (v6) -- (v7) node[midway, above] {$\ell$};
\draw (v7) -- (v8) node[midway, above] {$t$};
\draw (v1) -- (v5) node[midway, left] {$\mathcal{L}$};
\draw (v4) -- (v8) node[midway, left] {$\mathcal{L}$};

\shade[ball color=black] (v1) circle (.05);
\shade[ball color=black] (v2) circle (.05);
\shade[ball color=black] (v3) circle (.05);
\shade[ball color=black] (v4) circle (.05);
\shade[ball color=black] (v5) circle (.05);
\shade[ball color=black] (v6) circle (.05);
\shade[ball color=black] (v7) circle (.05);
\shade[ball color=black] (v8) circle (.05);

\end{tikzpicture}
\caption{Labeled Hamiltonian cycles, respectively of type 1, 2 and 3, in the Moebius ladder}
\end{figure}
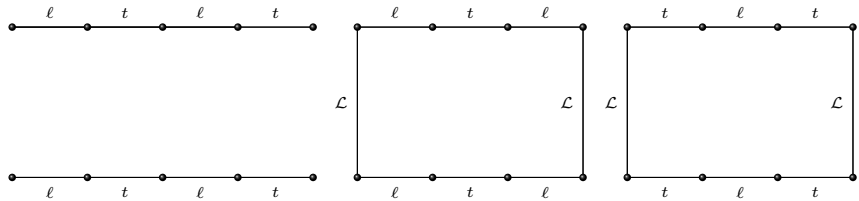

\begin{remark} (a) There exist \emph{hyperbolic} CAT(0) simplicial 2-complexes of order 2 without Hamiltonian surfaces. This is the case, for instance, of any simply connected simplicial complex with links isomorphic to  Coxeter's famous `my graph' \cite{Cox}, which is a vertex transitive  cubic non Hamiltonian graph of girth 7.   In fact, this appears to be the only known example of such a graph. (The graph is shown in Fig.\ 4.6 of \cite{God}.) It is in fact a 3-arc regular graph on 28 vertices having an automorphism group of order $336=2^4\cdot3\cdot7$ isomorphic to $\mathrm{PGL}_2(7)$. It would be interesting to study these hyperbolic complexes.

(b) It would also be interseting to study the ``locally minimally'' Hamiltonian complexes of order 2, by which we mean that every link is isomorphic to a Hamiltonian graph that contains the minimal number of, i.e., precisely three, distinct Hamiltonian cycles.
\end{remark}

We now proceed to the second step in the proof of Theorem \ref{T - main theorem}, which is ``semi-local'' in nature.

\section{Semi-local analysis}

Suppose $\Sigma$ is a Hamiltonian surface in $X$.

\begin{lemma}
 Every link of $\Sigma$ contains precisely two rungs.
 \end{lemma}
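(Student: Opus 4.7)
The plan is to rule out Type 1 (0-rung) Hamiltonian cycles at any link of $\Sigma$; combined with the preceding lemma, this will yield the desired conclusion. My approach uses a local label-swap rule at each edge of $X$, followed by a global counting.

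First I would establish the swap rule: at each edge $e = vu$ of $X$, the three incident faces are a triangle and two lozenges, one with $v$ (hence $u$) at a small (respectively large) angle, the other with $v$ (hence $u$) at a large (respectively small) angle. Consequently the labels $t, \ell, \mathcal{L}$ of these three faces at the link-vertex $e^\ast$ in the link at $v$ become $t, \mathcal{L}, \ell$ at the link-vertex $e^\ast$ in the link at $u$: the labels $\ell$ and $\mathcal{L}$ are swapped, while $t$ is fixed. I would then inspect the three Hamiltonian cycles in the Moebius ladder and record the cycle-pair distribution at the 8 link-vertices: Type 1 has $\{t, \ell\}$ at all 8 vertices; Type 2 has $\{t, \ell\}$ and $\{\ell, \mathcal{L}\}$, each at 4; Type 3 has $\{t, \ell\}$ and $\{t, \mathcal{L}\}$, each at 4. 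In particular, the pair $\{t, \mathcal{L}\}$ occurs only in Type 3.

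Next, assume for contradiction that $v$ has Type 1 link. Then every link-vertex at $v$ has cycle pair $\{t, \ell\}$, and by the swap rule, at each of the 8 neighbors $u$ of $v$ the link-vertex corresponding to $vu$ has cycle pair $\{t, \mathcal{L}\}$, forcing each such neighbor to have Type 3 link. To derive a contradiction, I would introduce counts $n_A(v), n_B(v), n_C(v)$ equal to the number of link-vertices at $v$ with cycle pair $\{t, \ell\}$, $\{\ell, \mathcal{L}\}$, $\{t, \mathcal{L}\}$ respectively. The swap rule implies that at each edge of $X$, the combined contribution to $n_A - n_C$ over the two endpoints vanishes. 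Summed over a finite fundamental domain for a suitable finite-index subgroup of $\Aut(F_2)$ acting on $X$, this yields $\sum_v (n_A - n_C)(v) = 0$; but the summand equals $8, 4, 0$ for Types 1, 2, 3 respectively, all non-negative, so the presence of any Type 1 vertex makes the sum strictly positive, a contradiction.

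The main obstacle is rigorously justifying the global counting, since \emph{a priori} $\Sigma$ need not be preserved by any finite-index subgroup of $\Aut(F_2)$. My preferred route is to work at the level of $\Aut(F_2)$-orbits (exploiting the uniform Moebius ladder link at each vertex), or alternatively to sharpen the local propagation: the rigid Type 3 structure forced at the 8 neighbors of a Type 1 vertex should be tracked to second neighbors via the specific adjacencies of the Brady complex, where one expects to find an incompatibility.
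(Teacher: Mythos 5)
Your local setup is correct and consistent with what the paper uses: each edge of $X$ lies in exactly one triangle and two lozenges, the labels $\ell$ and $\mathcal{L}$ swap (and $t$ is fixed) when one passes from one endpoint of an edge to the other, and the pair $\{t,\mathcal{L}\}$ occurs only in type 3 cycles, so a type 1 link at $x$ forces type 3 links at all neighbours. The problem is the step you yourself flag as the main obstacle: the global counting does not go through. The identity $\sum_v\bigl(n_A(v)-n_C(v)\bigr)=0$ is a telescoping sum over edges; on the infinite complex $X$ it is meaningless, and truncating to a finite set leaves a boundary term that cannot be discarded because $X$ is not amenable ($\Aut(F_2)$ contains free subgroups, so there is no F\o lner exhaustion along which boundary contributions are negligible). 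The proposed repair---summing over a fundamental domain of a finite-index subgroup preserving $\Sigma$---presupposes that $\Sigma$ is periodic, which at this stage of the argument is not known (periodicity is obtained in the paper only as a consequence of the uniqueness theorem proved later); and averaging over $\Aut(F_2)$-orbits does not help either, since the type of $\Sigma$ at a vertex is not an orbit invariant. So as written the contradiction is not established.

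The fix is that the incompatibility is already present at the \emph{first} neighbours, but it is invisible to your bookkeeping because you record only the unordered label pair at each single link-vertex and discard adjacency information inside the link. This is exactly what the paper exploits: if $x$ has a type 1 link and $T=xyz$ is a triangle at $x$, then the lozenges of $\Sigma$ through the edges $[x,y]$ and $[x,z]$ have their small angle at $x$, hence their large angle at $y$ and $z$ respectively; and the lozenge of $\Sigma$ through $[y,z]$ has its large angle at one of $y,z$, say $y$. Then in $L_y$ the cycle $\Sigma\cap L_y$ contains two rungs joined by the single horizontal $t$-edge coming from $T$, i.e.\ the subword $\mathcal{L}\text{--}t\text{--}\mathcal{L}$, whereas Lemma \ref{L- Hamiltonian cycles} forces the two rungs of any Hamiltonian cycle to be separated by paths of three horizontal edges. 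In your language: two link-vertices of $L_y$ carrying the pair $\{t,\mathcal{L}\}$ would be joined by a $t$-edge of the cycle, which happens in none of the five Hamiltonian cycles of $L$. Upgrading your pair statistics to record which cycle edge realizes the $t$ in each pair would close the gap without any global argument.
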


\begin{proof}
Let $x\in X$ be a vertex. If the link of $\Sigma$ at $x$ contains no rung, i.e., it is of type 1, then $\Sigma$ is locally isometric to a disk alternating in its center four triangles and four lozenges.  Here $x$ is adjacent to small lozenge angles the surface $\Sigma$. We let $e$ be an edge  not containing $x$ in any of the four triangles at $x$. Since the surface $\Sigma$ contains a lozenge incident to $e$, one vertex $y$ of the edge $e$ incident to the large angle in this lozenge. We have shown that the link of $\Sigma$ at $y$  two adjacent rungs, together an horizontal triangle edge between them (i.e., a subword of the form $\mathcal{L}-t-\mathcal{L}$ in the labeled ladder $L$).  There is no such Hamiltonian cycle in $L$ by Lemma \ref{L- Hamiltonian cycles}.
\end{proof} 

In other words, links of type 1 do not occur in $\Sigma$. In addition, a similar argument shows that links of type 2 also do not occur, and we therefore obtain the following result.

\begin{lemma}
 Every link of $\Sigma$ is of type 3.
 \end{lemma}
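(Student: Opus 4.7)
The plan is to re-use almost verbatim the argument that eliminated type 1 links in the previous lemma: the feature of a type 1 cycle that was actually exploited is that every triangle corner is flanked, in the link cycle, by two small-lozenge corners, and inspection of a type 2 Hamiltonian cycle of $L$ shows the same property (the arc between the two rung labels $\mathcal{L}$ reads $\ell-t-\ell$). So the same contradiction should still be available.

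Concretely, I would pick any triangle $T$ of $\Sigma$ at a vertex $x$ whose link is of type 2, and write $f_1=xy_1,\ f_2=xy_2$ for the two edges of $T$ at $x$ and $e=y_1y_2$ for the opposite edge. Let $B_i$ be the lozenge of $\Sigma$ on the other side of $T$ along $f_i$; by the observation above, $B_i$ has small angle at $x$, hence large angle at $y_i$ because adjacent vertices of a lozenge carry supplementary angles. For the edge $e$ I would use the elementary fact that every edge of $X$ is incident to exactly one triangle and two oppositely oriented lozenges; this is read off the labelled Moebius ladder, since the three edges at each vertex of $L$ carry one label of each type $t$, $\ell$, $\mathcal{L}$, and the only multiset of face contributions (a triangle giving $(t,t)$, a lozenge giving $(\ell,\mathcal{L})$ with either orientation) that realises this is one triangle plus two oppositely oriented lozenges. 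Hence the second face of $\Sigma$ at $e$ besides $T$ is a lozenge $P$, and $P$ has large angle at exactly one of $y_1,y_2$; relabelling we may assume $y_1$.

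Finally, I would read the link of $\Sigma$ at $y_1$: the three cells $B_1,T,P$ appear consecutively (sharing, in this order, the link-vertices $f_1$ and $e$) with corners $\mathcal{L}-t-\mathcal{L}$, a subword that by Lemma \ref{L- Hamiltonian cycles} occurs in no Hamiltonian cycle of $L$; this is the desired contradiction. The only step that departs from the type 1 argument is the edge-face analysis along $e$, which is really a short labelling check in the Moebius ladder and is the mildest of obstacles; everything else is a straight transcription of the previous proof.
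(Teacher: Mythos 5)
Your proposal is correct and follows essentially the same route as the paper: at a type 2 vertex the triangle corner is flanked by two small-lozenge corners, so the two flanking lozenges have large angles at the far endpoints of the triangle, and the lozenge of $\Sigma$ glued along the opposite edge forces a forbidden $\mathcal{L}-t-\mathcal{L}$ subword in the link of one of those endpoints. The only difference is that you spell out the edge--face bookkeeping (one triangle and two oppositely oriented lozenges per edge) that the paper leaves implicit.
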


\begin{proof} We repeat the proof for the sake of completeness. 
Let $x\in X$ be a vertex. Suppose the link of $\Sigma$ at $x$ is of type 2, and let $D_x$ be the disk in $\Sigma$ corresponding to union of all closed faces (namely, two triangles and four lozenges) attached to $x$.  We let $e$ be an edge  not containing $x$ in one of the two triangles at $x$. Since the surface $\Sigma$ contains a lozenge incident to $e$, one vertex $y$ of the edge $e$ incident to the large angle in this lozenge. We have shown that the link of $\Sigma$ at $y$  two adjacent rungs, together an horizontal triangle edge between them (i.e., a subword of the form $\mathcal{L}-t-\mathcal{L}$ in the labeled ladder $L$).  There is no such Hamiltonian cycle in $L$ by Lemma \ref{L- Hamiltonian cycles}.
\end{proof}

We shall deduce two important preliminary results from this lemma:

\begin{theorem}
At most two of the five Hamiltonian cycles in $L$ are associated with links in a Hamiltonian surface of $X$. 
\end{theorem}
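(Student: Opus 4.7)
The strategy is direct: the two preceding lemmas have already shown that every link of $\Sigma$ in $X$ must be of type 3, so it suffices to count how many of the five Hamiltonian cycles in $L$ enumerated by Lemma \ref{L- Hamiltonian cycles} carry the type 3 labeling. The plan is to check that there are exactly two such cycles.

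The first step is to recall the enumeration. Lemma \ref{L- Hamiltonian cycles} gives five Hamiltonian cycles in the unlabeled ladder: the unique rung-free cycle (type 1), and four cycles each containing exactly two rungs, necessarily consecutive and at distance 3 along the horizontal tracks. These four two-rung cycles must then be split according to the labeled classification into types 2 and 3. The distinction is local to the three-edge horizontal path between the two retained rungs: since the horizontal cycle around $L$ carries the alternating labels $\ell, t, \ell, t, \ell, t, \ell, t$, such a three-edge sub-path reads either $\ell$-$t$-$\ell$ (type 2) or $t$-$\ell$-$t$ (type 3).

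The second step is a direct enumeration. Label the rungs $r_1, r_2, r_3, r_4$ in cyclic order along the Moebius ladder; the four two-rung Hamiltonian cycles correspond to the four consecutive pairs of retained rungs $(r_1,r_2)$, $(r_2,r_3)$, $(r_3,r_4)$, $(r_4,r_1)$. For each of these pairs I would read off the label of the three-edge horizontal path joining them on either side of the ladder, using the labeling shown in Fig.\ \ref{Fig Moebius Ladder}. A short check shows that the pairs $(r_1,r_2)$ and $(r_3,r_4)$ give type 3, while the pairs $(r_2,r_3)$ and $(r_4,r_1)$ give type 2.

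Combining this count with the preceding lemma, which forces all links of $\Sigma$ to be of type 3, shows that only the two type 3 cycles can occur as links of a Hamiltonian surface in $X$, proving the theorem. The only subtlety is keeping track of the Moebius identification when walking around the horizontal 8-cycle, but this is straightforward from the vertex labeling of Fig.\ \ref{Fig Moebius Ladder}; no serious obstacle is anticipated.
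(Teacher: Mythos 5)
Your proof is correct and follows the same route as the paper: invoke the preceding lemma forcing every link of a Hamiltonian surface to be of type 3, then observe that only two of the five Hamiltonian cycles of the labeled Moebius ladder carry the type 3 labeling. Your explicit enumeration of the four two-rung cycles (two of type 2, two of type 3, distinguished by whether the three-edge horizontal arc between the retained rungs reads $\ell$-$t$-$\ell$ or $t$-$\ell$-$t$) checks out and simply makes explicit the count the paper states without detail.
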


\begin{proof}
The intersection $\Sigma\cap L_x$ of a Hamiltonian surface with $L_x$ is a labeled cycle of type 3, and there are only two links of type 3 in the Moebius ladder $L$. 
\end{proof}

The next result is semi-local; it shows Hamiltonian surfaces must visit all triangles, in addition visiting all vertices and all edges.

\begin{theorem}
Every Hamiltonian surface in $X$ contains every triangle of $X$. 
\end{theorem}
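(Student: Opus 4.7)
The plan is to deduce the theorem directly from the preceding lemma (every link of $\Sigma$ is of type $3$) via a simple corner count at each vertex. At any vertex $x\in X$, the link $L_x$ is the labeled Moebius ladder of Fig.~\ref{Fig Moebius Ladder}, and its edges correspond bijectively to the face-corners at $x$: edges labeled $t$ represent triangle corners, edges labeled $\ell$ or $\mathcal{L}$ represent lozenge corners. Since each triangle incident to $x$ contributes exactly one corner at $x$ and the ladder contains exactly four $t$-labeled edges, there are exactly four triangles of $X$ meeting any given vertex.

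The key observation is that the type $3$ Hamiltonian cycle in the labeled ladder uses \emph{all four} of the $t$-labeled edges. This is immediate from the labels in the type $3$ picture: each of the two horizontal arcs of length three carries the sequence $t-\ell-t$, contributing two $t$-edges apiece, for a total of four. By the previous lemma, $\Sigma\cap L_x$ is of type $3$ for every vertex $x\in X$, and therefore contains every triangle corner at $x$.

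To conclude, recall that a triangle $T\subset X$ is contained in $\Sigma$ if and only if, at any one of its vertices $x$, the edge of $L_x$ representing the corner of $T$ at $x$ belongs to $\Sigma\cap L_x$. Since $\Sigma$ visits every vertex of $X$ and, at each such vertex, the link cycle of $\Sigma$ exhausts the four triangle corners, every triangle of $X$ lies in $\Sigma$.

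There is no real obstacle beyond verifying that type $3$ saturates the four $t$-labels, which is a direct reading of the figure. As a by-product of the same count, one also sees that at each vertex $x$ the surface $\Sigma$ picks up, in addition to all four triangles, exactly four of the eight lozenges at $x$---two contributing small ($\ell$) corners and two contributing large ($\mathcal{L}$) corners---a piece of local data which should be useful for analyzing lozenges in the sequel.
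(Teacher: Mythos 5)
Your proof is correct and follows the same route as the paper: invoke the preceding lemma that every link of $\Sigma$ is of type $3$, note that a type $3$ cycle contains all four $t$-labeled edges (i.e.\ all four triangle corners at the vertex), and conclude using the fact that $\Sigma$ visits every vertex. The paper states this in three lines; your corner count merely makes explicit the verification that type $3$ saturates the $t$-labels.
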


\begin{proof}
Suppose $\Sigma$ is a Hamiltonian surface in $X$. For every vertex $x\in \Sigma$, the link $L_x$ is of type 3, and therefore contains the four triangles adjacent to $x$. Since $\Sigma$ visits every vertex, it contains every triangle in $X$.
\end{proof}

It follows by this result that a Hamiltonian surface must contain a checker set of lozenges  in any embedded $\diamond$-plane of $X$. Here we follow the terminology of \cite{autf2puzzles}; by $\diamond$-plane, we mean a checker type flat plane made exclusively of lozenges. We are now in a position to establish existence and uniqueness of the Hamiltonian surface.

\section{The  Hamiltonian surface}

In this section we prove the following global result. It shows that the Hamiltonian surfaces in $X$ are isolated. In fact, we shall prove more precisely that every lozenge is in a unique such surface. 

Once this fact is established, our main theorem (that $\Aut(F_2)$ is uniquely Hamiltonian) will follow by studying of the automorphism group of the Hamiltonian surface found in the present section. This is done in \ts \ref{S - proof of main t}.

\begin{theorem}\label{T - unique surface lozenge}
Every lozenge in $X$ is contained in a unique Hamiltonian surface. 
\end{theorem}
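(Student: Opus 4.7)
The approach is a local-to-global forcing argument. By the previous lemma, every Hamiltonian surface $\Sigma$ intersects each vertex link $L_x$ in one of the two type 3 Hamiltonian cycles of the Moebius ladder $L$. The first step is to analyze these two cycles and verify that they partition the four $\ell$-edges of $L$ (those corresponding to lozenges with a small angle at $x$) into two disjoint pairs, and similarly the four $\mathcal{L}$-edges into two disjoint pairs. Consequently, each lozenge incident to $x$ belongs to a unique type 3 cycle. Thus, given any lozenge $\lambda$ through $x$, the intersection $\Sigma \cap L_x$ is uniquely determined, and it prescribes exactly four lozenges at $x$ (two with small angle, two with large angle) that must lie in $\Sigma$.

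To prove uniqueness I would then propagate this local rigidity. Each lozenge selected at $x$ shares edges with other vertices of $X$; at each such neighboring vertex $y$, the same local analysis determines $\Sigma \cap L_y$, which in turn prescribes four more lozenges at $y$. Iterating yields a deterministic procedure that reconstructs $\Sigma$ from any single lozenge it contains. Since $X$ is connected, any two Hamiltonian surfaces containing $\lambda$ must agree everywhere.

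For existence, I would define $\Sigma_\lambda$ as the subcomplex generated by this propagation together with every triangle of $X$, the latter being forced into $\Sigma_\lambda$ by the preceding theorem. By construction, the link of $\Sigma_\lambda$ at every vertex is a single type 3 Hamiltonian cycle, so $\Sigma_\lambda$ is a topological $2$-manifold visiting every edge exactly once and containing $\lambda$. The main obstacle is to establish global consistency of the propagation: distinct paths from $\lambda$ to a lozenge $\mu$ must prescribe $\mu$ identically. Since $X$ is simply connected (being CAT(0)), this reduces to checking consistency around each elementary $2$-cell and each vertex star; for a shared edge $e$, the two faces of $\Sigma$ at $e$ are determined from either endpoint of $e$, and compatibility should follow from the order $2$ property together with a case analysis of the possible local configurations of lozenges around $e$ and around the $\diamond$-planes of $X$. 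An alternative route to existence is to exhibit a concrete Hamiltonian surface, for instance as the universal cover of the compact hyperbolic genus $2$ surface anticipated in the abstract, and then to apply uniqueness to conclude that it contains any given lozenge.
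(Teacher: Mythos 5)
Your uniqueness argument is sound and runs parallel to the paper's. The observation that the two type~3 Hamiltonian cycles of the Moebius ladder partition the four $\ell$-edges into two disjoint pairs and the four $\mathcal{L}$-edges into two disjoint pairs is correct (one type~3 cycle uses the rungs $r_3,r_4$ and the $\ell$-edges opposite them, the other uses $r_1,r_2$ and the remaining $\ell$-edges), so a single lozenge at $x$ does pin down $\Sigma\cap L_x$, and propagating through shared lozenges determines $\Sigma$ on all of the connected complex $X$. The paper packages the same local rigidity as the ``shuriken'' lemma ($\ell$--$t$ extends uniquely to $\ell$--$t$--$\mathcal{L}$), but the content is the same.

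The gap is in existence. You correctly identify that the issue is global consistency of the propagation, but then you only assert that ``compatibility should follow from the order~$2$ property together with a case analysis''; that case analysis \emph{is} the theorem, and it is exactly what the paper supplies. The paper's proof exhausts $X$ by simplicial balls $B_n$ (splitting each lozenge along its short diagonal), extends $\Sigma_n$ across each sphere vertex $x\in S_n$ by the unique completion of the path $\Sigma_n\cap L_x$ to a type~3 cycle, and then explicitly checks that two adjacent local extensions $\Sigma_x$ and $\Sigma_y$ agree on $B_1(x)\cap B_1(y)$, treating separately the cases where $[x,y]$ is a short diagonal, an order-$3$ edge whose inner face is a lozenge, and an order-$3$ edge whose inner face is a triangle (the last case resting on the final assertion of the shuriken lemma, that a shuriken determined at two vertices of a triangle is determined at the third). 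Without this verification your $\Sigma_\lambda$ is not known to be well defined, and the claim that its link at every vertex is a single type~3 cycle is unsupported. Your proposed alternative via the explicit genus-$2$ quotient has the same status: one must still verify that the candidate compact surface has circle links and that its preimage in $X$ is connected, so it does not substitute for the consistency check. Also, your appeal to simple connectivity would at best reduce consistency to loops around cells, but the propagation runs along chains of lozenges rather than edge-paths, so even that reduction needs to be set up carefully.
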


We begin with  a useful lemma.

\begin{lemma}\label{L - shuriken}
Let $t$ be a triangle and $\Sigma$ be a Hamiltonian surface in $X$. The union of $t$ and all lozenges in $\Sigma$ adjacent to $t$ forms a ``shuriken'', by which we mean the following configuration of a triangle and lozenges:

\begin{figure}[H]

\begin{tikzpicture}[line join=bevel,z=-5.5]
\tikzstyle{every node}=[font=\tiny]

\coordinate (t_0_0) at (0,0);
\coordinate (t_0_1) at (0.5,0.866);
\coordinate (t_0_2) at (1,1.732);
\coordinate (t_0_3) at (1.5,2.598);
\coordinate (t_0_4) at (2,3.464);
\coordinate (t_0_5) at (2.5,4.33);
\coordinate (t_1_0) at (1,0);
\coordinate (t_1_1) at (1.5,0.866);
\coordinate (t_1_2) at (2,1.732);
\coordinate (t_1_3) at (2.5,2.598);
\coordinate (t_1_4) at (3,3.464);
\coordinate (t_1_5) at (3.5,4.33);
\coordinate (t_2_0) at (2,0);
\coordinate (t_2_1) at (2.5,0.866);
\coordinate (t_2_2) at (3,1.732);
\coordinate (t_2_3) at (3.5,2.598);
\coordinate (t_2_4) at (4,3.464);
\coordinate (t_2_5) at (4.5,4.33);
\coordinate (t_3_0) at (3,0);
\coordinate (t_3_1) at (3.5,0.866);
\coordinate (t_3_2) at (4,1.732);
\coordinate (t_3_3) at (4.5,2.598);
\coordinate (t_3_4) at (5,3.464);
\coordinate (t_3_5) at (5.5,4.33);
\coordinate (t_4_0) at (4,0);
\coordinate (t_4_1) at (4.5,0.866);
\coordinate (t_4_2) at (5,1.732);
\coordinate (t_4_3) at (5.5,2.598);
\coordinate (t_4_4) at (6,3.464);
\coordinate (t_4_5) at (6.5,4.33);
\coordinate (t_5_0) at (5,0);
\coordinate (t_5_1) at (5.5,0.866);
\coordinate (t_5_2) at (6,1.732);
\coordinate (t_5_3) at (6.5,2.598);
\coordinate (t_5_4) at (7,3.464);
\coordinate (t_5_5) at (7.5,4.33);
\coordinate (t_5_5) at (7.5,4.33);

\draw[solid,line width=0.1mm,color=black,-] (t_1_1) -- (t_2_1) -- (t_1_2) -- cycle;
\fill [fill=black, fill opacity =0.05] (t_1_1) -- (t_2_1) -- (t_1_2) -- cycle;

\draw[solid,line width=0.1mm,color=black,-] (t_2_1) -- (t_2_0) -- (t_1_0)-- (t_1_1);
\draw[solid,line width=0.1mm,color=black,-] (t_1_1) -- (t_0_2) -- (t_0_3)-- (t_1_2);
\draw[solid,line width=0.1mm,color=black,-] (t_1_2) -- (t_2_2) -- (t_3_1)-- (t_2_1);

\end{tikzpicture}
\caption{}
\end{figure}

\noindent Furthermore, if this configuration holds locally at two vertices of the triangle, it must hold at the third vertex.
\end{lemma}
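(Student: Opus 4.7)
The plan is to combine two facts already established --- every link of $\Sigma$ is of type 3, and $\Sigma$ contains every triangle of $X$ --- to read the local picture around a fixed triangle $t$ directly off the labeled Moebius ladder $L$.

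First I inspect the type 3 Hamiltonian cycle as a cyclic word around $L$: it reads $t - \ell - t - \mathcal{L} - t - \ell - t - \mathcal{L}$, so every $t$-edge of the cycle is flanked by one $\ell$-edge and one $\mathcal{L}$-edge. Translating this back to $\Sigma$: at each vertex $v$ and for every triangle of $\Sigma$ containing $v$, the two edges of this triangle at $v$ are shared in $\Sigma$ with one lozenge carrying a small angle at $v$ and one carrying a large angle at $v$. Applying this at each vertex $v_1, v_2, v_3$ of $t$ rules out two triangles of $\Sigma$ meeting along an edge, so each edge $e_{ij}$ of $t$ is shared in $\Sigma$ with a lozenge $L_{ij}$. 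This produces exactly three lozenges $L_{12}, L_{13}, L_{23}$ adjacent to $t$ in $\Sigma$.

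Next I verify that these assemble into the shuriken. Since a lozenge has its small and large angles alternating around its four vertices, $L_{ij}$ carries a small angle at $v_i$ if and only if it carries a large angle at $v_j$. Fixing a choice at $v_1$ and propagating: small at $v_1$ in $L_{12}$ forces large at $v_2$ in $L_{12}$, hence small at $v_2$ in $L_{23}$ (by the type 3 constraint at $v_2$), hence large at $v_3$ in $L_{23}$, hence small at $v_3$ in $L_{13}$, hence large at $v_1$ in $L_{13}$, which is consistent with the original choice at $v_1$. The resulting windmill configuration is precisely the shuriken of the figure.

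For the furthermore clause, suppose the shuriken is present locally at $v_1$ and at $v_2$. Then the two lozenges of $\Sigma$ adjacent to $t$ at $v_1$ are $L_{12}$ and $L_{13}$, and those at $v_2$ are $L_{12}$ and $L_{23}$; hence all three shuriken lozenges already lie in $\Sigma$. The type 3 link at $v_3$ admits exactly two lozenges of $\Sigma$ adjacent to $t$, which must then be $L_{13}$ and $L_{23}$, giving the shuriken at $v_3$ as well. The only real care required is bookkeeping the alternating small/large labels of the lozenges as one travels around $t$; once the cyclic word for type 3 is on the table, no further global input is needed and there is no serious obstacle.
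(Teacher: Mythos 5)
Your proof is correct and follows essentially the same route as the paper: the paper's one-line argument is that the type-3 link classification forces every subword $\ell$--$t$ to extend uniquely to $\ell$--$t$--$\mathcal{L}$, applied at each vertex of the triangle, which is exactly your flanking-and-propagation argument spelled out in more detail. Your explicit bookkeeping of the alternating small/large lozenge angles around $t$, and your reading of the ``furthermore'' clause, match what the paper leaves implicit (``the last assertion is clear'').
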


\begin{proof}
It follows by Lemma that if $\ell-t$ is a subword in the labelled link of a vertex in $\Sigma$, then $\ell-t$ extends uniquely into $\ell-t-\mathcal{L}$. This observation, applied at every vertex of the given triangle, proves the lemma. The last assertion is clear.
\end{proof}

\begin{proof}[Proof of Theorem \ref{T - unique surface lozenge}]
Lemma \ref{L - shuriken} readily implies that at most two leaves can exist around a triangle for a Hamiltonian surface in $X$. Therefore, existence and uniqueness amount to the claim that these leaves `integrate' into a pair of global Hamiltonian surface. 

In order to do so, we view $X$ as a simplicial complex (as described originally in \cite{Brady}), in which every lozenge of $X$ is viewed a union of two triangles sharing a short diagonal; thus, there are two types of edges in $X$ viewed as a simplicial complex: the edges in the boundary of a triangle (or a lozenge) which are of order 3, and the short diagonals in lozenges, which are edges of order 2.  We fix a base vertex $*\in X$, and let $B_n$ (resp.\ $S_n$) denote the simplicial ball (resp., sphere) of radius $n\geq 1$ around $*$. 

Fix a surface $\Sigma_1$ in $B_1$ corresponding to one of the two possible choices for the intersection of a Hamiltonian surface of $X$ with $B_1$. Here we may assume that $B_1$, and the surface $\Sigma_1$, contain the lozenge initially given. We shall show that $\Sigma_1$ extends uniquely into a Hamiltonian surface of $X$.

  Suppose by induction that there exists a unique Hamiltonian surface $\Sigma_n$ in $B_n$ which contains $\Sigma_1$. This is true of $n=1$, so suppose this holds for a general $n\geq 1$ and let us establish this property at $n+1$. 

If $x$ is a vertex in $S_n$, then the intersection of $B_n$ with the link of $x$ in $X$ is a simplicial tree $T_x$ of diameter at most 3. In this tree, the surface $\Sigma_n$ determines a non-backtracking path $p_x$ between two of its boundary vertices. Since this path corresponds to a Hamiltonian surface in $B_n$, it is locally structured by the Shuriken lemma; an edge with a label $t$ must be adjacent to edges having both labels $\ell$ and $\mathcal{L}$. Thus, such path is included in a path of the form $\ell-t-\mathcal{L}-t$. 

Now, there exists a unique embedding of $T_x$ in $L_x$ taking the path $p_x$ to a subpath of a Hamiltonian cycle of type 3 in $L_x$. In other words, for every vertex $x$ in $S_n$, there exists a unique extension of $\Sigma_n$ into a local surface $\Sigma_x\subset B_1(x)$ in the ball of radius 1 in $X$ around $x$. We have to prove that the diverse extensions $\Sigma_x$, where $x\in S_n$ is a vertex, are compatible.  In order to do so, it suffices to prove that any two adjacent extensions $\Sigma_x$ and $\Sigma_y$, coincide on the intersection $B_1(x)\cap B_1(y)$. 

Suppose $[x,y]$ is an edge in $S_n$. We shall show that $\Sigma_x$ and $\Sigma_y$ coincide on the open book on $[x,y]$. There are two cases to  consider. If $[x,y]$ is a diagonal of a lozenge, it is obvious that the two extensions are compatible, since the intersection $B_1(x)\cap B_1(y)$ coincide with  the corresponding lozenge, and its belonging to $\Sigma_{n+1}$ is determined by whether it was in $\Sigma_n$ or not, within the $n$-ball $B_n$. Otherwise, $[x,y]$ is an edge of order 3, which we assume from now on.

Suppose first that the face containing $[x,y]$ in $B_n$ is a lozenge. Then the triangle in $S_{n+1}$ containing $[x,y]$ belongs to both $\Sigma_x$ and $\Sigma_y$, while the lozenge in $S_{n+1}$ containing $[x,y]$ belongs to neither. This proves $\Sigma_x$ and $\Sigma_y$ coincide on the intersection  $B_1(x)\cap B_1(y)$.

Suppose next that the face containing $[x,y]$ in $B_n$ is a triangle $t$. Then the unique lozenge in $S_{n+1}$ which is adjacent to $[x,y]$ and belongs to $\Sigma_x$ is determined by Lemma \ref{L - shuriken} (last assertion). The same holds in the case $\Sigma_y$. In both cases, the chosen lozenge is the unique lozenge which complete the shuriken partially defined by the lozenges in $\Sigma_n$ which are adjacent to $t$ in $B_n$. In particular, the choice of the lozenge which belongs to $\Sigma_x$ and $\Sigma_y$ is entirely determined by the two lozenges in $\Sigma_n$ which are already attached to $t$; thus,  $\Sigma_x$ and $\Sigma_y$ coincide on on the intersection  $B_1(x)\cap B_1(y)$ in this case too.

This concludes the proof  that there exists a unique extension $\Sigma_{n+1}$ of $\Sigma_n$ into a Hamiltonian surface in $B_{n+1}$. By induction, this  implies that $\Sigma_1$ admits a  unique extension into a Hamiltonian surface $\Sigma:=\varinjlim \Sigma_n$ of the space $X$.
\end{proof}

\begin{proof}[Proof of Theorem \ref{T - main theorem}] The two Hamiltonian surfaces in the Brady complex $X$ are the surfaces attached by Theorem \ref{T - unique surface lozenge} to any pair of adjacent lozenges in $X$.
\end{proof}

\section{An explicit Hamiltonian quotient}

 Because it is canonical in $X$ (up to isomorphism), canonical compact Hamiltonian quotients of $\Sigma$ exist in every compact quotient of $X$. In this section, we describe a compact quotient $S$ of $\Sigma$ having three vertices. A construction analogous to that presented in \cite{torus}, starting from this compact hyperbolic quotient rather than a flat metric 2-torus, provides a compact space having universal cover the Brady complex, where the `knights' of \cite{torus} are now of length 4 rather than 3 (corresponding to lozenges), and the pinching step is not required.  
 
 The surface $S$ can be described in a simple way using four ``charts'' as follows:

\begin{figure}[H]

\begin{tikzpicture}[line join=bevel,z=-5.5]
\tikzstyle{every node}=[font=\tiny]

\coordinate (t_0_0) at (0,0);
\coordinate (t_0_1) at (0.5,0.866);
\coordinate (t_0_2) at (1,1.732);
\coordinate (t_0_3) at (1.5,2.598);
\coordinate (t_0_4) at (2,3.464);
\coordinate (t_0_5) at (2.5,4.33);
\coordinate (t_1_0) at (1,0);
\coordinate (t_1_1) at (1.5,0.866);
\coordinate (t_1_2) at (2,1.732);
\coordinate (t_1_3) at (2.5,2.598);
\coordinate (t_1_4) at (3,3.464);
\coordinate (t_1_5) at (3.5,4.33);
\coordinate (t_2_0) at (2,0);
\coordinate (t_2_1) at (2.5,0.866);
\coordinate (t_2_2) at (3,1.732);
\coordinate (t_2_3) at (3.5,2.598);
\coordinate (t_2_4) at (4,3.464);
\coordinate (t_2_5) at (4.5,4.33);
\coordinate (t_3_0) at (3,0);
\coordinate (t_3_1) at (3.5,0.866);
\coordinate (t_3_2) at (4,1.732);
\coordinate (t_3_3) at (4.5,2.598);
\coordinate (t_3_4) at (5,3.464);
\coordinate (t_3_5) at (5.5,4.33);
\coordinate (t_4_0) at (4,0);
\coordinate (t_4_1) at (4.5,0.866);
\coordinate (t_4_2) at (5,1.732);
\coordinate (t_4_3) at (5.5,2.598);
\coordinate (t_4_4) at (6,3.464);
\coordinate (t_4_5) at (6.5,4.33);
\coordinate (t_5_0) at (5,0);
\coordinate (t_5_1) at (5.5,0.866);
\coordinate (t_5_2) at (6,1.732);
\coordinate (t_5_3) at (6.5,2.598);
\coordinate (t_5_4) at (7,3.464);
\coordinate (t_5_5) at (7.5,4.33);
\coordinate (t_5_5) at (7.5,4.33);

\fill [fill=black, fill opacity =0.05] (t_1_1) -- (t_2_1) -- (t_1_2) -- cycle;

\draw[solid,line width=0.1mm,color=black,-latex] (t_2_1) to node[right] {$x_c$} (t_2_0);
\draw[solid,line width=0.1mm,color=black,latex-]  (t_2_0) to node[below] {$x_b$} (t_1_0);
\draw[solid,line width=0.1mm,color=black,-latex] (t_1_0) to node[left] {$x_d$} (t_1_1);
\draw[solid,line width=0.1mm,color=black,latex-] (t_1_1) to node[below] {$x_a$} (t_2_1);

\draw[solid,line width=0.1mm,color=black,-latex] (t_1_1) to node[left] {$y_b$} (t_0_2);
\draw[solid,line width=0.1mm,color=black,latex-]  (t_0_2) to node[left] {$y_c$} (t_0_3);
\draw[solid,line width=0.1mm,color=black,-latex] (t_0_3) to node[right] {$y_d$} (t_1_2);
\draw[solid,line width=0.1mm,color=black,latex-] (t_1_2) to node[left] {$y_a$} (t_1_1);

\draw[solid,line width=0.1mm,color=black,-latex] (t_1_2) to node[above] {$z_c$} (t_2_2);
\draw[solid,line width=0.1mm,color=black,latex-]  (t_2_2) to node[right] {$z_b$} (t_3_1);
\draw[solid,line width=0.1mm,color=black,-latex] (t_3_1) to node[below] {$z_d$} (t_2_1);
\draw[solid,line width=0.1mm,color=black,latex-] (t_2_1) to node[right] {$z_a$} (t_1_2);
\end{tikzpicture}
\begin{tikzpicture}[line join=bevel,z=-5.5]
\tikzstyle{every node}=[font=\tiny]

\coordinate (t_0_0) at (0,0);
\coordinate (t_0_1) at (0.5,0.866);
\coordinate (t_0_2) at (1,1.732);
\coordinate (t_0_3) at (1.5,2.598);
\coordinate (t_0_4) at (2,3.464);
\coordinate (t_0_5) at (2.5,4.33);
\coordinate (t_1_0) at (1,0);
\coordinate (t_1_1) at (1.5,0.866);
\coordinate (t_1_2) at (2,1.732);
\coordinate (t_1_3) at (2.5,2.598);
\coordinate (t_1_4) at (3,3.464);
\coordinate (t_1_5) at (3.5,4.33);
\coordinate (t_2_0) at (2,0);
\coordinate (t_2_1) at (2.5,0.866);
\coordinate (t_2_2) at (3,1.732);
\coordinate (t_2_3) at (3.5,2.598);
\coordinate (t_2_4) at (4,3.464);
\coordinate (t_2_5) at (4.5,4.33);
\coordinate (t_3_0) at (3,0);
\coordinate (t_3_1) at (3.5,0.866);
\coordinate (t_3_2) at (4,1.732);
\coordinate (t_3_3) at (4.5,2.598);
\coordinate (t_3_4) at (5,3.464);
\coordinate (t_3_5) at (5.5,4.33);
\coordinate (t_4_0) at (4,0);
\coordinate (t_4_1) at (4.5,0.866);
\coordinate (t_4_2) at (5,1.732);
\coordinate (t_4_3) at (5.5,2.598);
\coordinate (t_4_4) at (6,3.464);
\coordinate (t_4_5) at (6.5,4.33);
\coordinate (t_5_0) at (5,0);
\coordinate (t_5_1) at (5.5,0.866);
\coordinate (t_5_2) at (6,1.732);
\coordinate (t_5_3) at (6.5,2.598);
\coordinate (t_5_4) at (7,3.464);
\coordinate (t_5_5) at (7.5,4.33);
\coordinate (t_5_5) at (7.5,4.33);

\fill [fill=black, fill opacity =0.05] (t_1_1) -- (t_2_1) -- (t_1_2) -- cycle;

\draw[solid,line width=0.1mm,color=black,-latex] (t_2_1) to node[right] {$x_d$} (t_2_0);
\draw[solid,line width=0.1mm,color=black,latex-]  (t_2_0) to node[below] {$x_a$} (t_1_0);
\draw[solid,line width=0.1mm,color=black,-latex] (t_1_0) to node[left] {$x_c$} (t_1_1);
\draw[solid,line width=0.1mm,color=black,latex-] (t_1_1) to node[below] {$x_b$} (t_2_1);

\draw[solid,line width=0.1mm,color=black,-latex] (t_1_1) to node[left] {$y_a$} (t_0_2);
\draw[solid,line width=0.1mm,color=black,latex-]  (t_0_2) to node[left] {$y_d$} (t_0_3);
\draw[solid,line width=0.1mm,color=black,-latex] (t_0_3) to node[right] {$y_c$} (t_1_2);
\draw[solid,line width=0.1mm,color=black,latex-] (t_1_2) to node[left] {$y_b$} (t_1_1);

\draw[solid,line width=0.1mm,color=black,-latex] (t_1_2) to node[above] {$z_d$} (t_2_2);
\draw[solid,line width=0.1mm,color=black,latex-]  (t_2_2) to node[right] {$z_a$} (t_3_1);
\draw[solid,line width=0.1mm,color=black,-latex] (t_3_1) to node[below] {$z_c$} (t_2_1);
\draw[solid,line width=0.1mm,color=black,latex-] (t_2_1) to node[right] {$z_b$} (t_1_2);
\end{tikzpicture}

\begin{tikzpicture}[line join=bevel,z=-5.5]
\tikzstyle{every node}=[font=\tiny]

\coordinate (t_0_0) at (0,0);
\coordinate (t_0_1) at (0.5,0.866);
\coordinate (t_0_2) at (1,1.732);
\coordinate (t_0_3) at (1.5,2.598);
\coordinate (t_0_4) at (2,3.464);
\coordinate (t_0_5) at (2.5,4.33);
\coordinate (t_1_0) at (1,0);
\coordinate (t_1_1) at (1.5,0.866);
\coordinate (t_1_2) at (2,1.732);
\coordinate (t_1_3) at (2.5,2.598);
\coordinate (t_1_4) at (3,3.464);
\coordinate (t_1_5) at (3.5,4.33);
\coordinate (t_2_0) at (2,0);
\coordinate (t_2_1) at (2.5,0.866);
\coordinate (t_2_2) at (3,1.732);
\coordinate (t_2_3) at (3.5,2.598);
\coordinate (t_2_4) at (4,3.464);
\coordinate (t_2_5) at (4.5,4.33);
\coordinate (t_3_0) at (3,0);
\coordinate (t_3_1) at (3.5,0.866);
\coordinate (t_3_2) at (4,1.732);
\coordinate (t_3_3) at (4.5,2.598);
\coordinate (t_3_4) at (5,3.464);
\coordinate (t_3_5) at (5.5,4.33);
\coordinate (t_4_0) at (4,0);
\coordinate (t_4_1) at (4.5,0.866);
\coordinate (t_4_2) at (5,1.732);
\coordinate (t_4_3) at (5.5,2.598);
\coordinate (t_4_4) at (6,3.464);
\coordinate (t_4_5) at (6.5,4.33);
\coordinate (t_5_0) at (5,0);
\coordinate (t_5_1) at (5.5,0.866);
\coordinate (t_5_2) at (6,1.732);
\coordinate (t_5_3) at (6.5,2.598);
\coordinate (t_5_4) at (7,3.464);
\coordinate (t_5_5) at (7.5,4.33);
\coordinate (t_5_5) at (7.5,4.33);

\fill [fill=black, fill opacity =0.05] (t_1_1) -- (t_2_1) -- (t_1_2) -- cycle;

\draw[solid,line width=0.1mm,color=black,-latex] (t_2_1) to node[right] {$x_a$} (t_2_0);
\draw[solid,line width=0.1mm,color=black,latex-]  (t_2_0) to node[below] {$x_d$} (t_1_0);
\draw[solid,line width=0.1mm,color=black,-latex] (t_1_0) to node[left] {$x_b$} (t_1_1);
\draw[solid,line width=0.1mm,color=black,latex-] (t_1_1) to node[below] {$x_c$} (t_2_1);

\draw[solid,line width=0.1mm,color=black,-latex] (t_1_1) to node[left] {$y_d$} (t_0_2);
\draw[solid,line width=0.1mm,color=black,latex-]  (t_0_2) to node[left] {$y_a$} (t_0_3);
\draw[solid,line width=0.1mm,color=black,-latex] (t_0_3) to node[right] {$y_b$} (t_1_2);
\draw[solid,line width=0.1mm,color=black,latex-] (t_1_2) to node[left] {$y_c$} (t_1_1);

\draw[solid,line width=0.1mm,color=black,-latex] (t_1_2) to node[above] {$z_a$} (t_2_2);
\draw[solid,line width=0.1mm,color=black,latex-]  (t_2_2) to node[right] {$z_d$} (t_3_1);
\draw[solid,line width=0.1mm,color=black,-latex] (t_3_1) to node[below] {$z_b$} (t_2_1);
\draw[solid,line width=0.1mm,color=black,latex-] (t_2_1) to node[right] {$z_c$} (t_1_2);
\end{tikzpicture}
\begin{tikzpicture}[line join=bevel,z=-5.5]
\tikzstyle{every node}=[font=\tiny]

\coordinate (t_0_0) at (0,0);
\coordinate (t_0_1) at (0.5,0.866);
\coordinate (t_0_2) at (1,1.732);
\coordinate (t_0_3) at (1.5,2.598);
\coordinate (t_0_4) at (2,3.464);
\coordinate (t_0_5) at (2.5,4.33);
\coordinate (t_1_0) at (1,0);
\coordinate (t_1_1) at (1.5,0.866);
\coordinate (t_1_2) at (2,1.732);
\coordinate (t_1_3) at (2.5,2.598);
\coordinate (t_1_4) at (3,3.464);
\coordinate (t_1_5) at (3.5,4.33);
\coordinate (t_2_0) at (2,0);
\coordinate (t_2_1) at (2.5,0.866);
\coordinate (t_2_2) at (3,1.732);
\coordinate (t_2_3) at (3.5,2.598);
\coordinate (t_2_4) at (4,3.464);
\coordinate (t_2_5) at (4.5,4.33);
\coordinate (t_3_0) at (3,0);
\coordinate (t_3_1) at (3.5,0.866);
\coordinate (t_3_2) at (4,1.732);
\coordinate (t_3_3) at (4.5,2.598);
\coordinate (t_3_4) at (5,3.464);
\coordinate (t_3_5) at (5.5,4.33);
\coordinate (t_4_0) at (4,0);
\coordinate (t_4_1) at (4.5,0.866);
\coordinate (t_4_2) at (5,1.732);
\coordinate (t_4_3) at (5.5,2.598);
\coordinate (t_4_4) at (6,3.464);
\coordinate (t_4_5) at (6.5,4.33);
\coordinate (t_5_0) at (5,0);
\coordinate (t_5_1) at (5.5,0.866);
\coordinate (t_5_2) at (6,1.732);
\coordinate (t_5_3) at (6.5,2.598);
\coordinate (t_5_4) at (7,3.464);
\coordinate (t_5_5) at (7.5,4.33);
\coordinate (t_5_5) at (7.5,4.33);

\fill [fill=black, fill opacity =0.05] (t_1_1) -- (t_2_1) -- (t_1_2) -- cycle;

\draw[solid,line width=0.1mm,color=black,-latex] (t_2_1) to node[right] {$x_c$} (t_2_0);
\draw[solid,line width=0.1mm,color=black,latex-]  (t_2_0) to node[below] {$x_c$} (t_1_0);
\draw[solid,line width=0.1mm,color=black,-latex] (t_1_0) to node[left] {$x_a$} (t_1_1);
\draw[solid,line width=0.1mm,color=black,latex-] (t_1_1) to node[below] {$x_d$} (t_2_1);

\draw[solid,line width=0.1mm,color=black,-latex] (t_1_1) to node[left] {$y_c$} (t_0_2);
\draw[solid,line width=0.1mm,color=black,latex-]  (t_0_2) to node[left] {$y_b$} (t_0_3);
\draw[solid,line width=0.1mm,color=black,-latex] (t_0_3) to node[right] {$y_a$} (t_1_2);
\draw[solid,line width=0.1mm,color=black,latex-] (t_1_2) to node[left] {$y_d$} (t_1_1);

\draw[solid,line width=0.1mm,color=black,-latex] (t_1_2) to node[above] {$z_b$} (t_2_2);
\draw[solid,line width=0.1mm,color=black,latex-]  (t_2_2) to node[right] {$z_c$} (t_3_1);
\draw[solid,line width=0.1mm,color=black,-latex] (t_3_1) to node[below] {$z_a$} (t_2_1);
\draw[solid,line width=0.1mm,color=black,latex-] (t_2_1) to node[right] {$z_d$} (t_1_2);
\end{tikzpicture}
\caption{The  first compact hyperbolic Hamiltonian surface $S$}
\end{figure}
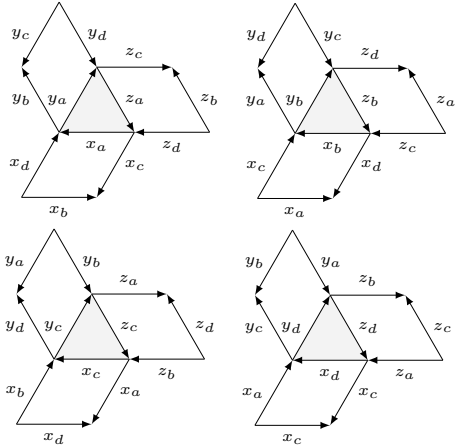

The charts indicate the local structure described in Lemma \ref{L - shuriken}. The four triangles are distinct. The surface contains three lozenges, denoted $x$, $y$ and $z$, which are adjacent to each triangle. The charts describe the local gluing pattern at every vertex of every triangle (all charts are used at each vertex). It is straightforward to verify that the links are circles of length $10\pi/3$, and that the surface is compact hyperbolic of genus two.

We can repeat the  procedure in \cite{torus} (and reviewed in the introduction of the present paper). This is technically simpler in the present paper, since the pinching step is now irrelevant, and it suffices to add only three lozenges to the surface $S$ in order to obtain a compact space $V$ with universal cover the Brady complex (however, the connexion to surgery is lost, and one cannot recover, for instance, a result such as Theorem 1.3 in \cite{torus}).

The three new lozenges are straightforward to compute. They are given by:

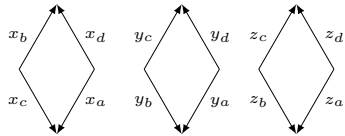
\begin{figure}[H]

\begin{tikzpicture}[line join=bevel,z=-5.5]
\tikzstyle{every node}=[font=\tiny]

\coordinate (t_0_0) at (0,0);
\coordinate (t_0_1) at (0.5,0.866);
\coordinate (t_0_2) at (1,1.732);
\coordinate (t_0_3) at (1.5,2.598);
\coordinate (t_0_4) at (2,3.464);
\coordinate (t_0_5) at (2.5,4.33);
\coordinate (t_1_0) at (1,0);
\coordinate (t_1_1) at (1.5,0.866);
\coordinate (t_1_2) at (2,1.732);
\coordinate (t_1_3) at (2.5,2.598);
\coordinate (t_1_4) at (3,3.464);
\coordinate (t_1_5) at (3.5,4.33);
\coordinate (t_2_0) at (2,0);
\coordinate (t_2_1) at (2.5,0.866);
\coordinate (t_2_2) at (3,1.732);
\coordinate (t_2_3) at (3.5,2.598);
\coordinate (t_2_4) at (4,3.464);
\coordinate (t_2_5) at (4.5,4.33);
\coordinate (t_3_0) at (3,0);
\coordinate (t_3_1) at (3.5,0.866);
\coordinate (t_3_2) at (4,1.732);
\coordinate (t_3_3) at (4.5,2.598);
\coordinate (t_3_4) at (5,3.464);
\coordinate (t_3_5) at (5.5,4.33);
\coordinate (t_4_0) at (4,0);
\coordinate (t_4_1) at (4.5,0.866);
\coordinate (t_4_2) at (5,1.732);
\coordinate (t_4_3) at (5.5,2.598);
\coordinate (t_4_4) at (6,3.464);
\coordinate (t_4_5) at (6.5,4.33);
\coordinate (t_5_0) at (5,0);
\coordinate (t_5_1) at (5.5,0.866);
\coordinate (t_5_2) at (6,1.732);
\coordinate (t_5_3) at (6.5,2.598);
\coordinate (t_5_4) at (7,3.464);
\coordinate (t_5_5) at (7.5,4.33);
\coordinate (t_5_5) at (7.5,4.33);

\draw[solid,line width=0.1mm,color=black,latex-] (t_1_1) to node[left] {$x_c$} (t_0_2);
\draw[solid,line width=0.1mm,color=black,-latex]  (t_0_2) to node[left] {$x_b$} (t_0_3);
\draw[solid,line width=0.1mm,color=black,latex-] (t_0_3) to node[right] {$x_d$} (t_1_2);
\draw[solid,line width=0.1mm,color=black,-latex] (t_1_2) to node[right] {$x_a$} (t_1_1);
\end{tikzpicture}
\begin{tikzpicture}[line join=bevel,z=-5.5]
\tikzstyle{every node}=[font=\tiny]

\coordinate (t_0_0) at (0,0);
\coordinate (t_0_1) at (0.5,0.866);
\coordinate (t_0_2) at (1,1.732);
\coordinate (t_0_3) at (1.5,2.598);
\coordinate (t_0_4) at (2,3.464);
\coordinate (t_0_5) at (2.5,4.33);
\coordinate (t_1_0) at (1,0);
\coordinate (t_1_1) at (1.5,0.866);
\coordinate (t_1_2) at (2,1.732);
\coordinate (t_1_3) at (2.5,2.598);
\coordinate (t_1_4) at (3,3.464);
\coordinate (t_1_5) at (3.5,4.33);
\coordinate (t_2_0) at (2,0);
\coordinate (t_2_1) at (2.5,0.866);
\coordinate (t_2_2) at (3,1.732);
\coordinate (t_2_3) at (3.5,2.598);
\coordinate (t_2_4) at (4,3.464);
\coordinate (t_2_5) at (4.5,4.33);
\coordinate (t_3_0) at (3,0);
\coordinate (t_3_1) at (3.5,0.866);
\coordinate (t_3_2) at (4,1.732);
\coordinate (t_3_3) at (4.5,2.598);
\coordinate (t_3_4) at (5,3.464);
\coordinate (t_3_5) at (5.5,4.33);
\coordinate (t_4_0) at (4,0);
\coordinate (t_4_1) at (4.5,0.866);
\coordinate (t_4_2) at (5,1.732);
\coordinate (t_4_3) at (5.5,2.598);
\coordinate (t_4_4) at (6,3.464);
\coordinate (t_4_5) at (6.5,4.33);
\coordinate (t_5_0) at (5,0);
\coordinate (t_5_1) at (5.5,0.866);
\coordinate (t_5_2) at (6,1.732);
\coordinate (t_5_3) at (6.5,2.598);
\coordinate (t_5_4) at (7,3.464);
\coordinate (t_5_5) at (7.5,4.33);
\coordinate (t_5_5) at (7.5,4.33);

\draw[solid,line width=0.1mm,color=black,latex-] (t_1_1) to node[left] {$y_b$} (t_0_2);
\draw[solid,line width=0.1mm,color=black,-latex]  (t_0_2) to node[left] {$y_c$} (t_0_3);
\draw[solid,line width=0.1mm,color=black,latex-] (t_0_3) to node[right] {$y_d$} (t_1_2);
\draw[solid,line width=0.1mm,color=black,-latex] (t_1_2) to node[right] {$y_a$} (t_1_1);
\end{tikzpicture}\begin{tikzpicture}[line join=bevel,z=-5.5]
\tikzstyle{every node}=[font=\tiny]

\coordinate (t_0_0) at (0,0);
\coordinate (t_0_1) at (0.5,0.866);
\coordinate (t_0_2) at (1,1.732);
\coordinate (t_0_3) at (1.5,2.598);
\coordinate (t_0_4) at (2,3.464);
\coordinate (t_0_5) at (2.5,4.33);
\coordinate (t_1_0) at (1,0);
\coordinate (t_1_1) at (1.5,0.866);
\coordinate (t_1_2) at (2,1.732);
\coordinate (t_1_3) at (2.5,2.598);
\coordinate (t_1_4) at (3,3.464);
\coordinate (t_1_5) at (3.5,4.33);
\coordinate (t_2_0) at (2,0);
\coordinate (t_2_1) at (2.5,0.866);
\coordinate (t_2_2) at (3,1.732);
\coordinate (t_2_3) at (3.5,2.598);
\coordinate (t_2_4) at (4,3.464);
\coordinate (t_2_5) at (4.5,4.33);
\coordinate (t_3_0) at (3,0);
\coordinate (t_3_1) at (3.5,0.866);
\coordinate (t_3_2) at (4,1.732);
\coordinate (t_3_3) at (4.5,2.598);
\coordinate (t_3_4) at (5,3.464);
\coordinate (t_3_5) at (5.5,4.33);
\coordinate (t_4_0) at (4,0);
\coordinate (t_4_1) at (4.5,0.866);
\coordinate (t_4_2) at (5,1.732);
\coordinate (t_4_3) at (5.5,2.598);
\coordinate (t_4_4) at (6,3.464);
\coordinate (t_4_5) at (6.5,4.33);
\coordinate (t_5_0) at (5,0);
\coordinate (t_5_1) at (5.5,0.866);
\coordinate (t_5_2) at (6,1.732);
\coordinate (t_5_3) at (6.5,2.598);
\coordinate (t_5_4) at (7,3.464);
\coordinate (t_5_5) at (7.5,4.33);
\coordinate (t_5_5) at (7.5,4.33);

\draw[solid,line width=0.1mm,color=black,latex-] (t_1_1) to node[left] {$z_b$} (t_0_2);
\draw[solid,line width=0.1mm,color=black,-latex]  (t_0_2) to node[left] {$z_c$} (t_0_3);
\draw[solid,line width=0.1mm,color=black,latex-] (t_0_3) to node[right] {$z_d$} (t_1_2);
\draw[solid,line width=0.1mm,color=black,-latex] (t_1_2) to node[right] {$z_a$} (t_1_1);
\end{tikzpicture}
\caption{The three filling lozenges $x'$, $y'$, and $z'$}\label{Fig - filling S}
\end{figure}

Observe that each new lozenge contains two of the three vertices of the surface $S$; for each vertex, this new set of lozenge adds two edges with label $\mathcal{L}$, and two edges with label $\ell$, in order to form the Moebius ladder $L$. Thus, we obtain, as announced:

\begin{theorem}
The universal cover $\widetilde V$ is the Brady complex $X$. 
\end{theorem}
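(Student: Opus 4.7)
The plan is to reduce the identification $\widetilde V \cong X$ to the uniqueness characterization of the Brady complex already used in \cite{cp} and \cite{torus}: up to isomorphism, $X$ is the unique simply connected, locally CAT(0) 2-complex of order 2 whose labeled vertex links are all isomorphic to the Moebius ladder $L$ of Figure \ref{Fig Moebius Ladder}. Since $\widetilde V$ is simply connected by construction, the task reduces to verifying (i) that each of the three vertices of $V$ has link (as a labeled graph) isomorphic to $L$, and (ii) that $V$ satisfies the Gromov link condition.

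For (i), I would first observe that, since $S$ is a Hamiltonian surface with all links of type 3 (by the lemmas of Sections 2--3), the surface $S$ contributes at every vertex of $V$ a type-3 Hamiltonian cycle in $L$: two rungs labeled $\mathcal{L}$ together with six horizontal edges labeled $t$-$\ell$-$t$ on each track. This leaves exactly four edges of $L$ missing at each vertex, namely the two remaining $\mathcal{L}$-rungs and the two missing horizontal $\ell$-edges.

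Next I would unpack the combinatorics of Figure \ref{Fig - filling S}. Each filling lozenge $x'$, $y'$, $z'$ is glued onto the corresponding lozenge $x$, $y$, $z$ of $S$ along the four edges with matching labels, so no new vertices are created; $V$ has the same three vertices as $S$. A face-count shows that each filling lozenge contains exactly two of the three vertices of $V$, and each such vertex occurs at a pair of opposite corners of the lozenge. Since opposite corners of a lozenge carry equal angles, each filling lozenge contributes two edges labeled $\mathcal{L}$ to the link of one vertex (the one at its two large corners) and two edges labeled $\ell$ to the link of another (the one at its two small corners). A direct inspection of Figure \ref{Fig - filling S}, using the cyclic edge labels consistent with the four charts of $S$, shows that the contributions of $x'$, $y'$, $z'$ at each vertex of $V$ amount to exactly two $\mathcal{L}$-edges and two $\ell$-edges, attached at precisely the four endpoints where the type-3 cycle from $S$ is missing its edges. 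Thus each link is fully identified with $L$. Condition (ii) is then automatic, since the angular systole of $L$ equals $2\pi$, realized by the $\diamond$-plane 4-cycle $\ell$-$\mathcal{L}$-$\ell$-$\mathcal{L}$; consequently $\widetilde V$ is a simply connected CAT(0) order-2 complex with Moebius ladder links, hence isomorphic to $X$ by the cited uniqueness.

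The main obstacle is the combinatorial bookkeeping in the inspection step: one must verify not only that four edges of the correct labels appear at each vertex, but that they attach at the correct endpoints in the cyclic order prescribed by $L$, so that the two new $\mathcal{L}$-rungs land in the positions of the missing rungs of the type-3 cycle (and not, say, opposite them). This is precisely what the careful design of Figure \ref{Fig - filling S}, inherited from the chart data of $S$, accomplishes, but it is where the geometric content of the construction lies and I would not skip it.
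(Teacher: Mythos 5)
Your proposal is correct and takes essentially the same route as the paper: the paper's proof is a one-line citation of \cite[Theorem 4.3]{torus}, immediately preceded by precisely the observation you spell out, namely that each filling lozenge contributes two $\mathcal{L}$-edges and two $\ell$-edges at each vertex so as to complete the type-3 cycle coming from $S$ into the full Moebius ladder $L$. Your additional explicit mention of the Gromov link condition and of the recognition/uniqueness statement is just an unpacking of what that cited theorem provides.
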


\begin{proof}
This follows directly by \cite[Theorem 4.3]{torus}.
\end{proof}

There is a sort of duality (``arrow revering'') that leads to symmetry and balance between the surface $S$ and its sibling $S'$, in relation with the ambiant complex $V$. The second Hamiltonian surface $S'$ consists of the same set of four triangles together with the lozenges shown in Fig.\ \ref{Fig - filling S}. The intersection $S\cap S'$ in $V$ is the set of triangles, as is the case in the universal cover $X$. We prove in the next section that $S$ and $S'$ are isomorphic through an automorphism of $V$.

\begin{remark}
Although not required to prove the theorems stated in the introduction, it is interesting in view of \cite{torus} to compute in the space $V$ what the quotients of the $\diamond$-flats are in $X$. In fact, $V$ can be viewed as a space built from these quotients, using four triangles, and three flat surfaces, each of which is a union of two lozenges. The computation of these flat surfaces is straightforward. Namely, it is obvious that the union of $x$ and $x'$ is a flat 2-torus, as is the union of $y$ and $y'$. The union of $z$ and $z'$, on the other hand, is  a Klein bottle and thus is non-orientable. This gives an example of a (non-orientable) torsion-free subgroup $\pi_1(V)$ of finite index in $\Aut(F_2)$ constructed from $t=2$ tori and $b=1$ Klein Bottle (compare \cite[Remark 5.3]{torus}).
\end{remark}

Having explicit genus 2 quotients of our Hamitonian surfaces, we can now establish the last result of our paper. 

\section{Proof of Theorem \ref{T - isomorphic}}\label{S - proof of main t}

We shall exhibit automorphisms of $V$ using the description found in the previous \ts; it results that the two Hamiltonian surfaces are equivariantly isomorphic. 

We know that, in addition to the four triangles,  $V$ contains six lozenges as follows:

\begin{figure}[H]

\begin{tikzpicture}[line join=bevel,z=-5.5]
\tikzstyle{every node}=[font=\tiny]

\coordinate (t_0_0) at (0,0);
\coordinate (t_0_1) at (0.5,0.866);
\coordinate (t_0_2) at (1,1.732);
\coordinate (t_0_3) at (1.5,2.598);
\coordinate (t_0_4) at (2,3.464);
\coordinate (t_0_5) at (2.5,4.33);
\coordinate (t_1_0) at (1,0);
\coordinate (t_1_1) at (1.5,0.866);
\coordinate (t_1_2) at (2,1.732);
\coordinate (t_1_3) at (2.5,2.598);
\coordinate (t_1_4) at (3,3.464);
\coordinate (t_1_5) at (3.5,4.33);
\coordinate (t_2_0) at (2,0);
\coordinate (t_2_1) at (2.5,0.866);
\coordinate (t_2_2) at (3,1.732);
\coordinate (t_2_3) at (3.5,2.598);
\coordinate (t_2_4) at (4,3.464);
\coordinate (t_2_5) at (4.5,4.33);
\coordinate (t_3_0) at (3,0);
\coordinate (t_3_1) at (3.5,0.866);
\coordinate (t_3_2) at (4,1.732);
\coordinate (t_3_3) at (4.5,2.598);
\coordinate (t_3_4) at (5,3.464);
\coordinate (t_3_5) at (5.5,4.33);
\coordinate (t_4_0) at (4,0);
\coordinate (t_4_1) at (4.5,0.866);
\coordinate (t_4_2) at (5,1.732);
\coordinate (t_4_3) at (5.5,2.598);
\coordinate (t_4_4) at (6,3.464);
\coordinate (t_4_5) at (6.5,4.33);
\coordinate (t_5_0) at (5,0);
\coordinate (t_5_1) at (5.5,0.866);
\coordinate (t_5_2) at (6,1.732);
\coordinate (t_5_3) at (6.5,2.598);
\coordinate (t_5_4) at (7,3.464);
\coordinate (t_5_5) at (7.5,4.33);
\coordinate (t_5_5) at (7.5,4.33);

\draw[solid,line width=0.1mm,color=black,-latex] (t_1_1) to node[left] {$x_c$} (t_0_2);
\draw[solid,line width=0.1mm,color=black,latex-]  (t_0_2) to node[left] {$x_b$} (t_0_3);
\draw[solid,line width=0.1mm,color=black,-latex] (t_0_3) to node[right] {$x_d$} (t_1_2);
\draw[solid,line width=0.1mm,color=black,latex-] (t_1_2) to node[right] {$x_a$} (t_1_1);
\end{tikzpicture}
\begin{tikzpicture}[line join=bevel,z=-5.5]
\tikzstyle{every node}=[font=\tiny]

\coordinate (t_0_0) at (0,0);
\coordinate (t_0_1) at (0.5,0.866);
\coordinate (t_0_2) at (1,1.732);
\coordinate (t_0_3) at (1.5,2.598);
\coordinate (t_0_4) at (2,3.464);
\coordinate (t_0_5) at (2.5,4.33);
\coordinate (t_1_0) at (1,0);
\coordinate (t_1_1) at (1.5,0.866);
\coordinate (t_1_2) at (2,1.732);
\coordinate (t_1_3) at (2.5,2.598);
\coordinate (t_1_4) at (3,3.464);
\coordinate (t_1_5) at (3.5,4.33);
\coordinate (t_2_0) at (2,0);
\coordinate (t_2_1) at (2.5,0.866);
\coordinate (t_2_2) at (3,1.732);
\coordinate (t_2_3) at (3.5,2.598);
\coordinate (t_2_4) at (4,3.464);
\coordinate (t_2_5) at (4.5,4.33);
\coordinate (t_3_0) at (3,0);
\coordinate (t_3_1) at (3.5,0.866);
\coordinate (t_3_2) at (4,1.732);
\coordinate (t_3_3) at (4.5,2.598);
\coordinate (t_3_4) at (5,3.464);
\coordinate (t_3_5) at (5.5,4.33);
\coordinate (t_4_0) at (4,0);
\coordinate (t_4_1) at (4.5,0.866);
\coordinate (t_4_2) at (5,1.732);
\coordinate (t_4_3) at (5.5,2.598);
\coordinate (t_4_4) at (6,3.464);
\coordinate (t_4_5) at (6.5,4.33);
\coordinate (t_5_0) at (5,0);
\coordinate (t_5_1) at (5.5,0.866);
\coordinate (t_5_2) at (6,1.732);
\coordinate (t_5_3) at (6.5,2.598);
\coordinate (t_5_4) at (7,3.464);
\coordinate (t_5_5) at (7.5,4.33);
\coordinate (t_5_5) at (7.5,4.33);

\draw[solid,line width=0.1mm,color=black,-latex] (t_1_1) to node[left] {$y_b$} (t_0_2);
\draw[solid,line width=0.1mm,color=black,latex-]  (t_0_2) to node[left] {$y_c$} (t_0_3);
\draw[solid,line width=0.1mm,color=black,-latex] (t_0_3) to node[right] {$y_d$} (t_1_2);
\draw[solid,line width=0.1mm,color=black,latex-] (t_1_2) to node[right] {$y_a$} (t_1_1);
\end{tikzpicture}\begin{tikzpicture}[line join=bevel,z=-5.5]
\tikzstyle{every node}=[font=\tiny]

\coordinate (t_0_0) at (0,0);
\coordinate (t_0_1) at (0.5,0.866);
\coordinate (t_0_2) at (1,1.732);
\coordinate (t_0_3) at (1.5,2.598);
\coordinate (t_0_4) at (2,3.464);
\coordinate (t_0_5) at (2.5,4.33);
\coordinate (t_1_0) at (1,0);
\coordinate (t_1_1) at (1.5,0.866);
\coordinate (t_1_2) at (2,1.732);
\coordinate (t_1_3) at (2.5,2.598);
\coordinate (t_1_4) at (3,3.464);
\coordinate (t_1_5) at (3.5,4.33);
\coordinate (t_2_0) at (2,0);
\coordinate (t_2_1) at (2.5,0.866);
\coordinate (t_2_2) at (3,1.732);
\coordinate (t_2_3) at (3.5,2.598);
\coordinate (t_2_4) at (4,3.464);
\coordinate (t_2_5) at (4.5,4.33);
\coordinate (t_3_0) at (3,0);
\coordinate (t_3_1) at (3.5,0.866);
\coordinate (t_3_2) at (4,1.732);
\coordinate (t_3_3) at (4.5,2.598);
\coordinate (t_3_4) at (5,3.464);
\coordinate (t_3_5) at (5.5,4.33);
\coordinate (t_4_0) at (4,0);
\coordinate (t_4_1) at (4.5,0.866);
\coordinate (t_4_2) at (5,1.732);
\coordinate (t_4_3) at (5.5,2.598);
\coordinate (t_4_4) at (6,3.464);
\coordinate (t_4_5) at (6.5,4.33);
\coordinate (t_5_0) at (5,0);
\coordinate (t_5_1) at (5.5,0.866);
\coordinate (t_5_2) at (6,1.732);
\coordinate (t_5_3) at (6.5,2.598);
\coordinate (t_5_4) at (7,3.464);
\coordinate (t_5_5) at (7.5,4.33);
\coordinate (t_5_5) at (7.5,4.33);

\draw[solid,line width=0.1mm,color=black,-latex] (t_1_1) to node[left] {$z_b$} (t_0_2);
\draw[solid,line width=0.1mm,color=black,latex-]  (t_0_2) to node[left] {$z_c$} (t_0_3);
\draw[solid,line width=0.1mm,color=black,-latex] (t_0_3) to node[right] {$z_a$} (t_1_2);
\draw[solid,line width=0.1mm,color=black,latex-] (t_1_2) to node[right] {$z_d$} (t_1_1);
\end{tikzpicture}

\begin{tikzpicture}[line join=bevel,z=-5.5]
\tikzstyle{every node}=[font=\tiny]

\coordinate (t_0_0) at (0,0);
\coordinate (t_0_1) at (0.5,0.866);
\coordinate (t_0_2) at (1,1.732);
\coordinate (t_0_3) at (1.5,2.598);
\coordinate (t_0_4) at (2,3.464);
\coordinate (t_0_5) at (2.5,4.33);
\coordinate (t_1_0) at (1,0);
\coordinate (t_1_1) at (1.5,0.866);
\coordinate (t_1_2) at (2,1.732);
\coordinate (t_1_3) at (2.5,2.598);
\coordinate (t_1_4) at (3,3.464);
\coordinate (t_1_5) at (3.5,4.33);
\coordinate (t_2_0) at (2,0);
\coordinate (t_2_1) at (2.5,0.866);
\coordinate (t_2_2) at (3,1.732);
\coordinate (t_2_3) at (3.5,2.598);
\coordinate (t_2_4) at (4,3.464);
\coordinate (t_2_5) at (4.5,4.33);
\coordinate (t_3_0) at (3,0);
\coordinate (t_3_1) at (3.5,0.866);
\coordinate (t_3_2) at (4,1.732);
\coordinate (t_3_3) at (4.5,2.598);
\coordinate (t_3_4) at (5,3.464);
\coordinate (t_3_5) at (5.5,4.33);
\coordinate (t_4_0) at (4,0);
\coordinate (t_4_1) at (4.5,0.866);
\coordinate (t_4_2) at (5,1.732);
\coordinate (t_4_3) at (5.5,2.598);
\coordinate (t_4_4) at (6,3.464);
\coordinate (t_4_5) at (6.5,4.33);
\coordinate (t_5_0) at (5,0);
\coordinate (t_5_1) at (5.5,0.866);
\coordinate (t_5_2) at (6,1.732);
\coordinate (t_5_3) at (6.5,2.598);
\coordinate (t_5_4) at (7,3.464);
\coordinate (t_5_5) at (7.5,4.33);
\coordinate (t_5_5) at (7.5,4.33);

\draw[solid,line width=0.1mm,color=black,latex-] (t_1_1) to node[left] {$x_c$} (t_0_2);
\draw[solid,line width=0.1mm,color=black,-latex]  (t_0_2) to node[left] {$x_b$} (t_0_3);
\draw[solid,line width=0.1mm,color=black,latex-] (t_0_3) to node[right] {$x_d$} (t_1_2);
\draw[solid,line width=0.1mm,color=black,-latex] (t_1_2) to node[right] {$x_a$} (t_1_1);
\end{tikzpicture}
\begin{tikzpicture}[line join=bevel,z=-5.5]
\tikzstyle{every node}=[font=\tiny]

\coordinate (t_0_0) at (0,0);
\coordinate (t_0_1) at (0.5,0.866);
\coordinate (t_0_2) at (1,1.732);
\coordinate (t_0_3) at (1.5,2.598);
\coordinate (t_0_4) at (2,3.464);
\coordinate (t_0_5) at (2.5,4.33);
\coordinate (t_1_0) at (1,0);
\coordinate (t_1_1) at (1.5,0.866);
\coordinate (t_1_2) at (2,1.732);
\coordinate (t_1_3) at (2.5,2.598);
\coordinate (t_1_4) at (3,3.464);
\coordinate (t_1_5) at (3.5,4.33);
\coordinate (t_2_0) at (2,0);
\coordinate (t_2_1) at (2.5,0.866);
\coordinate (t_2_2) at (3,1.732);
\coordinate (t_2_3) at (3.5,2.598);
\coordinate (t_2_4) at (4,3.464);
\coordinate (t_2_5) at (4.5,4.33);
\coordinate (t_3_0) at (3,0);
\coordinate (t_3_1) at (3.5,0.866);
\coordinate (t_3_2) at (4,1.732);
\coordinate (t_3_3) at (4.5,2.598);
\coordinate (t_3_4) at (5,3.464);
\coordinate (t_3_5) at (5.5,4.33);
\coordinate (t_4_0) at (4,0);
\coordinate (t_4_1) at (4.5,0.866);
\coordinate (t_4_2) at (5,1.732);
\coordinate (t_4_3) at (5.5,2.598);
\coordinate (t_4_4) at (6,3.464);
\coordinate (t_4_5) at (6.5,4.33);
\coordinate (t_5_0) at (5,0);
\coordinate (t_5_1) at (5.5,0.866);
\coordinate (t_5_2) at (6,1.732);
\coordinate (t_5_3) at (6.5,2.598);
\coordinate (t_5_4) at (7,3.464);
\coordinate (t_5_5) at (7.5,4.33);
\coordinate (t_5_5) at (7.5,4.33);

\draw[solid,line width=0.1mm,color=black,latex-] (t_1_1) to node[left] {$y_b$} (t_0_2);
\draw[solid,line width=0.1mm,color=black,-latex]  (t_0_2) to node[left] {$y_c$} (t_0_3);
\draw[solid,line width=0.1mm,color=black,latex-] (t_0_3) to node[right] {$y_d$} (t_1_2);
\draw[solid,line width=0.1mm,color=black,-latex] (t_1_2) to node[right] {$y_a$} (t_1_1);
\end{tikzpicture}\begin{tikzpicture}[line join=bevel,z=-5.5]
\tikzstyle{every node}=[font=\tiny]

\coordinate (t_0_0) at (0,0);
\coordinate (t_0_1) at (0.5,0.866);
\coordinate (t_0_2) at (1,1.732);
\coordinate (t_0_3) at (1.5,2.598);
\coordinate (t_0_4) at (2,3.464);
\coordinate (t_0_5) at (2.5,4.33);
\coordinate (t_1_0) at (1,0);
\coordinate (t_1_1) at (1.5,0.866);
\coordinate (t_1_2) at (2,1.732);
\coordinate (t_1_3) at (2.5,2.598);
\coordinate (t_1_4) at (3,3.464);
\coordinate (t_1_5) at (3.5,4.33);
\coordinate (t_2_0) at (2,0);
\coordinate (t_2_1) at (2.5,0.866);
\coordinate (t_2_2) at (3,1.732);
\coordinate (t_2_3) at (3.5,2.598);
\coordinate (t_2_4) at (4,3.464);
\coordinate (t_2_5) at (4.5,4.33);
\coordinate (t_3_0) at (3,0);
\coordinate (t_3_1) at (3.5,0.866);
\coordinate (t_3_2) at (4,1.732);
\coordinate (t_3_3) at (4.5,2.598);
\coordinate (t_3_4) at (5,3.464);
\coordinate (t_3_5) at (5.5,4.33);
\coordinate (t_4_0) at (4,0);
\coordinate (t_4_1) at (4.5,0.866);
\coordinate (t_4_2) at (5,1.732);
\coordinate (t_4_3) at (5.5,2.598);
\coordinate (t_4_4) at (6,3.464);
\coordinate (t_4_5) at (6.5,4.33);
\coordinate (t_5_0) at (5,0);
\coordinate (t_5_1) at (5.5,0.866);
\coordinate (t_5_2) at (6,1.732);
\coordinate (t_5_3) at (6.5,2.598);
\coordinate (t_5_4) at (7,3.464);
\coordinate (t_5_5) at (7.5,4.33);
\coordinate (t_5_5) at (7.5,4.33);

\draw[solid,line width=0.1mm,color=black,latex-] (t_1_1) to node[left] {$z_b$} (t_0_2);
\draw[solid,line width=0.1mm,color=black,-latex]  (t_0_2) to node[left] {$z_c$} (t_0_3);
\draw[solid,line width=0.1mm,color=black,latex-] (t_0_3) to node[right] {$z_d$} (t_1_2);
\draw[solid,line width=0.1mm,color=black,-latex] (t_1_2) to node[right] {$z_a$} (t_1_1);
\end{tikzpicture}
\caption{The six lozenges $x$, $y$, $z$ and (below them) $x'$, $y'$, and $z'$}\label{Fig - filling S}
\end{figure}
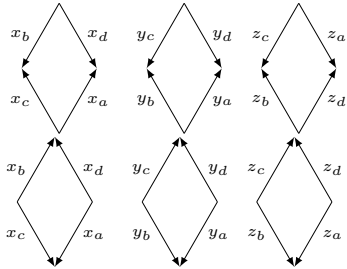

An automorphism permutes this six elements set and the triangles. We may define several.

A first apparent map, although not one suitable for our purpose of proving that $\Sigma$ and $\Sigma'$ are isomorphic, is:

\begin{align*}
x_a&\lra x_d\\
x_b&\lra x_c\\
y_a&\lra y_d\\
y_b&\lra x_c\\
z_a&\lra z_d\\
z_b&\lra z_c\\
\end{align*}

We call this map $\theta_1$. It is obvious to verify that this is an automorphism of $V$ which takes triangles $a$ and $b$ to $d$ and $c$ respectively. It flips every lozenge along the long diagonal.

A second map $\theta_2$ acts as follows:

\begin{align*}
x_a&\lra y_a^{-1}\\
x_c&\lra y_b^{-1}\\
x_b&\lra y_c^{-1}\\
x_d&\lra y_d^{-1}\\
z_a&\lra z_a^{-1}\\
z_b&\lra z_c^{-1}\\
z_c&\lra z_b^{-1}\\
z_d&\lra z_d^{-1}\\
\end{align*}

Thus,  it takes $x$ to $y'$ and $y$ to $x'$, and permutes $z$ and $z'$. This maps acts on the triangles $a$ and $b$, and permutes the triangles $c$ and $d$. It is also straightforward to verify that it induces an automorphism of $V$.

\begin{lemma}
There exits an involutive automorphism of $V$ taking $S$ to $S'$.
\end{lemma}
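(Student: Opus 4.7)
The plan is to show that the map $\theta_2$ constructed just before the lemma is the required involutive automorphism. First I would read off from the definition of $\theta_2$ that it sends the set of lozenges $\{x,y,z\}$ onto $\{y',x',z'\}=\{x',y',z'\}$, and that it fixes the set of four triangles (acting trivially on two of them, $a$ and $b$, and swapping the other two, $c$ and $d$). Since $S$ is the union of the four triangles with $\{x,y,z\}$ while $S'$ is the union of the same four triangles with $\{x',y',z'\}$, this immediately gives $\theta_2(S)=S'$, and the triangles of $S$ and $S'$ match up cellwise as required.

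Second, I would verify that $\theta_2$ is involutive. The action of $\theta_2$ was presented as a list of symmetric swaps $u\leftrightarrow v^{\pm 1}$ between oriented edges; the inversion signs on each side are compatible with reversal of orientation, so applying $\theta_2$ twice returns every oriented edge to itself. Because the $2$-cells of $V$ are determined by their boundary words, this forces $\theta_2^2 = \id_V$ on the whole complex.

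The main obstacle, and the step where the bulk of the work lies, is checking that $\theta_2$ is indeed a cellular (and hence isometric) automorphism of $V$. This amounts to a finite verification at each of the three vertices of $S$: using the four charts describing the shuriken structure of $S$ from Lemma \ref{L - shuriken}, together with the boundary data of the three filling lozenges $x',y',z'$ from Fig.\ \ref{Fig - filling S}, one has to confirm that the labeled cyclic link at each vertex, viewed as a copy of the Moebius ladder $L$, is preserved by the prescribed permutation of oriented edges. This is essentially mechanical once the charts are aligned. As a useful shortcut, granted only that $\theta_2$ is an automorphism of $V$, one can sidestep any direct identification of $\theta_2(S)$ with $S'$ by invoking Theorem \ref{T - unique surface lozenge}: the image $\theta_2(S)$ is a Hamiltonian surface of $V$ that contains the four triangles of $S$ but none of its lozenges, so by the uniqueness of the two Hamiltonian surfaces of the Brady complex it must coincide with $S'$.
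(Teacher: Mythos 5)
Your proposal is correct and follows the paper's own route: the paper's proof is simply the one\--line observation that $\theta_2$ (defined just beforehand, where it is asserted to be an automorphism of $V$ exchanging $x,y,z$ with $y',x',z'$) is the required involution, and your write\--up merely fleshes out the verifications the paper leaves implicit. The only caveat is your parenthetical claim about exactly which triangles are fixed versus swapped, which does not quite match the paper's description, but this detail is immaterial to the argument.
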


\begin{proof}
$\theta_2$ is such an automorphism.
\end{proof}

We may now prove our second main result. 

\begin{proof}[Proof of Theorem \ref{T - isomorphic}]
This follows directly from the previous lemma lifting by lifting $\theta_2$ to the universal cover. This map $\tilde \theta_2$ permutes the two Hamiltonian surfaces in $X$ and  $\tilde \theta_2\circ s=(\theta_2)_*(s)\circ \tilde \theta_2$ where $s\in \pi_1(B)$. The group $\pi_1(B)$ is a finite index subgroup of $\Aut(F_2)$ viewed as a acting on $X$ isometrically.
\end{proof}

We note that $\theta_1$ and $\theta_2$ commute. In fact, the following holds:

\begin{theorem}
$\Aut(B)=\ZI/2\ZI^3$.
\end{theorem}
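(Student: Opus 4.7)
The plan is to exhibit a third involutive automorphism $\theta_3$ of $V$, commuting with $\theta_1$ and $\theta_2$, and then to bound $|\Aut(V)|$ from above by $8$. Since each $\theta_i$ is an involution and they pairwise commute, the subgroup $\langle \theta_1,\theta_2,\theta_3\rangle$ is abelian of exponent $2$; independence then gives $(\ZI/2\ZI)^3$, and the upper bound forces equality with $\Aut(V)$.

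For the construction of $\theta_3$, a natural candidate is the simultaneous \emph{short-diagonal flip}, defined on oriented edges by $x_a\leftrightarrow x_c$ and $x_b\leftrightarrow x_d$, and analogously on $y,z$ and their primed counterparts, together with the induced action on triangles read off from the chart data. That this is an automorphism is verified by checking that each of the gluing charts in the previous section is sent to another such chart compatibly. Commutativity with $\theta_1$ (which flips every lozenge along the \emph{long} diagonal) and with $\theta_2$ (which swaps $x$ with $y'$ and $y$ with $x'$ and permutes $z,z'$) is then a finite check on the explicit permutations. Independence of the three involutions follows from their differing effects on the pair $\{S,S'\}$ and on whether the pairs $\{x,x'\}$ and $\{y,y'\}$ are exchanged.

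For the upper bound $|\Aut(V)|\le 8$, I would exploit the nonhomogeneity of the three flat surfaces of $V$ pointed out in the remark preceding this section: the surfaces $x\cup x'$ and $y\cup y'$ are tori, whereas $z\cup z'$ is a Klein bottle. Any automorphism of $V$ therefore stabilizes the pair $\{z,z'\}$ setwise, yielding a $\ZI/2\ZI$-quotient depending on whether the two tori $x\cup x'$ and $y\cup y'$ are exchanged. A second $\ZI/2\ZI$-quotient comes from the action on the set of Hamiltonian surfaces $\{S,S'\}$ (Theorem \ref{T - main theorem}). The kernel $K$ of the combined homomorphism $\Aut(V)\to(\ZI/2\ZI)^2$ consists of automorphisms preserving each Hamiltonian surface and each of the three flat surfaces.

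The principal obstacle is to show $|K|\le 2$. An automorphism in $K$ preserves every lozenge and every Hamiltonian surface setwise, and so is determined by its action on a single oriented edge by propagation through the gluing charts, similar to the propagation argument used in the proof of Theorem \ref{T - unique surface lozenge}. A direct inspection of the chart system shows only two such extensions are compatible: the identity and the common long-diagonal flip $\theta_1$. Combined with the eight explicit automorphisms in $\langle\theta_1,\theta_2,\theta_3\rangle$, this yields $\Aut(V)=(\ZI/2\ZI)^3$.
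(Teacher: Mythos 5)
Your lower bound is essentially the paper's: your ``short-diagonal flip'' $\theta_3$ (sending $x_a\leftrightarrow x_c$, $x_b\leftrightarrow x_d$, etc.) is the composite of the paper's $\theta_1$ with the paper's $\theta_3$ (which rotates each lozenge, $x_a\leftrightarrow x_b$, $x_c\leftrightarrow x_d$), so either choice of third generator yields the same subgroup $(\ZI/2\ZI)^3$ of $\Aut(V)$. The genuine gap is in your upper bound. You define $\Aut(V)\to(\ZI/2\ZI)^2$ by recording whether the two tori are exchanged and whether $S$ and $S'$ are exchanged, and you claim the kernel $K$ is $\{1,\theta_1\}$. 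This is false, and your own construction witnesses it: $\theta_3$ (in your version or the paper's) permutes the edge set of each lozenge within itself, hence fixes each of $x,y,z,x',y',z'$ setwise, hence fixes each flat surface $x\cup x'$, $y\cup y'$, $z\cup z'$ and each of $S$, $S'$. So $\theta_3\in K$, and $K$ contains the full Klein four-group $\{1,\theta_1,\theta_3,\theta_1\theta_3\}$, realizing all four combinatorial symmetries of the lozenge $z$ (both diagonal reflections and the rotation). Your assertion that ``only two such extensions are compatible'' contradicts the existence of the very automorphism you built in the first half, and the count $|\Aut(V)|\le 2\cdot 4=8$ collapses. The same coarseness infects your independence argument: the action on $\{S,S'\}$ and on the pair of tori cannot distinguish $1$, $\theta_1$, $\theta_3$, $\theta_1\theta_3$ from one another.

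The correct bookkeeping, and the one the paper uses, is $8=2\times 4$ with the factors the other way around. The only index-$2$ reduction coming from global invariants is the action on $\{z,z'\}$ (equivalently on $\{S,S'\}$), achieved by $\theta_2$; the image of your map to $(\ZI/2\ZI)^2$ is only the diagonal subgroup of order $2$, not all of $(\ZI/2\ZI)^2$. The stabilizer of $z$ then acts on the lozenge $z$, this action is faithful by the propagation/rigidity argument you correctly identify (an automorphism acting trivially on $z$ is trivial), and its image is the \emph{entire} symmetry group $(\ZI/2\ZI)^2$ of $z$, realized by $\theta_1$ and $\theta_3$. To repair your proof, replace the two-coordinate quotient by the single $\ZI/2\ZI$ given by the action on $\{z,z'\}$, and show that the stabilizer of $z$ injects into $\mathrm{Sym}(z)\cong(\ZI/2\ZI)^2$; surjectivity of that injection is exactly the existence of $\theta_1$ and $\theta_3$.
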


\begin{proof}
A map $\theta\in \Aut(B)$ induces an automorphism of the Klein bottle $K:=\ip{z,z'}$. Furthermore, after composing with $\theta_2$, we may assume that $\theta$ stabilizes $z$. Consider the following map which we call $\theta_3$:

\begin{align*}
x_a&\lra x_b\\
x_c&\lra x_d\\
y_a&\lra y_b\\
y_c&\lra y_d\\
z_a&\lra z_b\\
z_c&\lra z_d\\
\end{align*}

This map is an automorphism of $V$ which rotates every lozenge. After composing with $\theta_1$ and $\theta_2$, we may assume that $\theta$ acts trivially on $z$ and therefore is trivial. Thus, $\Aut(B)$ is a group isomorphic to $\ZI/2\ZI^3$ generated by three involutive automorphisms $\theta_1$, $\theta_2$ and $\theta_3$.
\end{proof}

\end{document}